\DeclareMathOperator*{\iintt}{\rm \iint}
\newcommand{\lo}{\mathrm{O}}
\newcommand{\ep}{\epsilon}
\newcommand{\de}{\delta}
\newcommand{\al}{\alpha}
\newcommand{\si}{\sigma}
\newcommand{\Ga}{\Gamma}
\newcommand{\id}{\,\mbox{\rm 1}\hspace{-0.63em}\mbox{\rm \small 1\,}\!}
\newcommand{\idx}[1]{\id\left[#1\right]}
\newcommand{\E}{\mathrm{E}}
\newcommand{\dmin}{D_{\mathrm{inf}}}
\newcommand{\n}{\nonumber}
\newcommand{\nn}{\nonumber\\}
\newcommand{\scap}{,\;}
\newcommand{\muhat}{\bar{x}}
\newcommand{\muhatn}[2]{\bar{x}_{#1}(#2)}
\newcommand{\muhatt}[2]{\bar{x}_{#1,#2}}
\newcommand{\vhat}{S}
\newcommand{\vhatn}[2]{\vhat_{#1}(#2)}
\newcommand{\vhatt}[2]{\vhat_{#1,#2}}
\newcommand{\sihat}{\hat{\si}}
\newcommand{\rd}{\mathrm{d}}
\newcommand{\normal}{\mathcal{N}}
\newcommand{\e}{\mathrm{e}}
\newcommand{\De}{\Delta}
\newcommand{\com}{\,,}
\newcommand{\per}{\,.}
\newcommand{\mut}{\tilde{\mu}}
\newcommand{\pt}[1]{P_{\muhat_1,\sihat_1^2}}
\newcommand{\pti}[3]{p_{#2}(#3|\samplet{#1}{#2})}
\newcommand{\ptic}[3]{\bar{p}_{#2}(#3|\samplet{#1}{#2})}
\newcommand{\eA}{\mathcal{A}}
\newcommand{\eB}{\mathcal{B}}
\newcommand{\wa}[1]{\frac{1}{1-\e^{-#1}}}
\newcommand{\regret}{\mathrm{Regret}}
\newcommand{\sample}{\hat{\theta}}
\newcommand{\samplet}[2]{\sample_{#1,#2}}
\newcommand{\samplen}[2]{\sample_{#1}(#2)}
\newtheorem{theorem}{Theorem}
\newtheorem{lemma}[theorem]{Lemma}
\newtheorem{corollary}[theorem]{Corollary}
\newtheorem{remark}{Remark}
\newtheorem{proposition}[theorem]{Proposition}
\newenvironment{proof2}[1]{\vspace{0.0mm}\noindent{\it Proof #1.}}{\vspace{0mm}}
\def\dqed{\relax\tag*{\qed}}
\begin{document}
\allowdisplaybreaks[2]
%

%

\twocolumn[

\aistatstitle{
Optimality of Thompson Sampling
for Gaussian Bandits\\ Depends on Priors
}
\aistatsauthor{Junya Honda \And Akimichi Takemura}

\aistatsaddress{
The University of Tokyo, Japan.\\%
\texttt{\small \{honda,takemura\}@stat.t.u-tokyo.ac.jp}
} ]

\begin{abstract}
In stochastic bandit problems, a Bayesian policy called Thompson sampling (TS)
has recently attracted much
attention for its excellent empirical performance.
However, the theoretical analysis of this policy is difficult and
its asymptotic optimality is only proved
for one-parameter models.
In this paper we discuss the optimality of TS
for the model of normal distributions with unknown means and variances
as one of the most fundamental example of multiparameter models.
First we prove that the expected regret of TS with the uniform prior
achieves the theoretical bound,
which is the first result to show that
the asymptotic bound is achievable for the normal distribution model.
Next we prove that TS with Jeffreys prior and reference prior cannot achieve the theoretical bound.
Therefore
the choice of priors is important for TS
and non-informative priors are sometimes risky in cases of multiparameter models.

\end{abstract}
\section{INTRODUCTION}
\label{sec:intro}
In reinforcement learning a tradeoff between exploration and 
exploitation of knowledge is considered.
The multiarmed bandit problem is
one formulation of the reinforcement learning and
is a model of a gambler
playing a slot machine with multiple arms.
A dilemma for the gambler is that he
cannot know whether
the expectation of an arm is high or not
without pulling it many times
but he suffers a loss if he pulls
suboptimal (i.e., not optimal) arms many times.

This problem was formulated by \emcite{robbins}
and its theoretical bound was derived by \emcite{lai}
for single parametric models,
which was extended to multiparameter models by
\emcite{burnetas}.
These theoretical bounds show that
any suboptimal arm has to be pulled
at least logarithmic number of rounds and its coefficient is
determined by the distributions of suboptimal arms
and the expectation of the optimal arm.

Along with the asymptotic bound for this problem,
achievability of the bound has also been considered in many models.
\emcite{lai} proved the asymptotic optimality of a policy based on the notion of
{\it upper confidence bound} (UCB)
for Laplace distributions (which do not belong to exponential families)
and some exponential families including normal distributions with {\it known} variances.
The achievability of the bound was later extended to a subclass of one-parameter
exponential families \cite{kl_ucb}.

On the other hand in multiparameter or nonparametric models,
\emcite{burnetas} and \emcite{honda_colt} proved the achievability for finite-support distributions
and bounded-support distributions, respectively.
However, the above two models are both compact and
achievability of the bound is not known for non-compact
multiparameter models,
which include normal distributions with unknown means and variances.
Since the normal distribution model is one of the most basic settings of stochastic bandits,
many researches have been conducted for this model
\cite{burnetas,ucb,bayes_ucb}.
However, to the authors' knowledge, only the UCB-normal policy \cite{ucb}
assures a (non-optimal) logarithmic regret for this model\footnote{%
The theoretical analysis of UCB-normal contains
conjectures verified only numerically and the logarithmic regret
is not assured in the strict sense.%
}.

In this paper we discuss
the asymptotic optimality of Thompson sampling
(TS) \cite{thompson_original}
for this normal distribution model with unknown means and variances.
TS is a Bayesian policy which chooses an arm randomly according to
the posterior probability with which the arm is the optimal.
This policy was recently rediscovered and
is researched extensively
because of its excellent empirical performance for many models
\cite{thompson_empirical}.
The theoretical analysis of TS was first given for
Bernoulli model \cite{thompson_log,thompson} and
was later extended to general one-parameter exponential families
\cite{thompson_exponential}.

The asymptotic optimality of TS
under uniform prior is proved for
Bernoulli model in \emcite{thompson},
whereas it is proved for a more general model,
one-parameter exponential family,
under Jeffreys prior in \emcite{thompson_exponential}.
Therefore, TSs with
uniform prior and Jeffreys prior are asymptotically equivalent
at least for the Bernoulli model.
Nevertheless, we prove for the normal distribution model that 
TS with uniform prior achieves the asymptotic bound
whereas TS with Jeffreys prior and reference prior
cannot.
Furthermore, TS with Jeffreys prior cannot even achieve a logarithmic regret
and suffers a polynomial regret in expectation.
This result implies that TS may be more sensitive to the
choice of priors than expected
and non-informative priors
are sometimes risky (in other words, too optimistic) 
for multiparameter models.

This paper is organized as follows.
In Sect.\,\ref{sect_pre}, we formulate the bandit problem
for the normal distribution model
and introduce Thompson sampling.
We give the main result on the optimality of TS in Sect.\,\ref{sect_main}.
The remaining sections are devoted to the proof of the main result.
In Sect.\,\ref{sect_prob}, we derive inequalities
for probabilities which appear in the normal distribution model.
We prove the optimality of TS with conservative priors
in Sect.\,\ref{sect_regret}
and prove the non-optimality of TS with optimistic priors
in Sect.\,\ref{sect_dame}.

\section{Preliminaries}\label{sect_pre}
We consider the $K$-armed stochastic bandit problem
in the normal distribution model.
The gambler pulls an arm $i\in\{1,\cdots,K\}$ at each round
and receives a reward independently and identically distributed
by $\normal(\mu_i,\si_i^2)$, where
$\normal(\mu,\si^2)$ denotes the normal distribution with mean $\mu$ and variance $\si^2$.
The gambler does not know the parameter $(\mu_i,\si_i^2)
\in \mathbb{R}\times (0,\infty)$.
The maximum expectation is denoted by
$\mu^*=\max_{i\in\{1,2,\cdots,K\}}\mu_i$.
Let $J(t)$ be the arm pulled at the $t$-th round
and $N_i(t)$ be the number of times that the arm $i$ is pulled
before the $t$-th round.
Then the regret of the gambler at the $T$-th round is given
for $\De_i=\mu^*-\mu_i$ by
\begin{align}
\regret(T)=\sum_{t=1}^T\De_{J(t)}=\sum_{i}\De_i N_i(T+1)\per\n
\end{align}

Let $X_{i,n}$ be the $n$-th reward from the arm $i$.
We define
\begin{align}
\muhatt{i}{n}&= \frac1n\sum_{m=1}^n X_{i,m}\com\nn
\vhatt{i}{n}&=\sum_{m=1}^n (X_{i,m}-\muhatt{i}{n})^2=
\sum_{m=1}^n X_{i,m}^2-n\muhatt{i}{n}^2\com\n
\end{align}
that is,
$\muhatt{i}{n}$ and $\vhatt{i}{n}$ denote the sample mean and the
sum of squares from $n$ samples from the arm $i$, respectively.
We denote the sample mean and the sum of squares before the $t$-th round
by
$\muhatn{i}{t}= \muhatt{i}{N_i(t)}$ and
$\vhatn{i}{t}= \vhatt{i}{N_i(t)}$.
It is well known that
\begin{align}
\muhatt{i}{n}&\sim \normal(\mu_i, \si_i^2/n)\com
\qquad
\frac{\vhatt{i}{n}}{\si_i^2}\sim\chi_{n-1}^2\com\label{normal_density}
\end{align}
where the chi-squared distribution $\chi_{n-1}^2$ with degree of freedom $n-1$
has the density
\begin{align}
\chi_{n-1}^2(s)=
\frac{s^{\frac{n-3}{2}}\e^{-\frac{s}{2}}}{2^{\frac{n-1}{2}}\Ga(\frac{n-1}{2})}\per\n
\end{align}

\subsection{Asymptotic Bound}
It is shown in \emcite{burnetas} that
under any policy satisfying a mild regularity condition
the expected regret satisfies
\begin{align}
\lefteqn{
\!\!\!
\liminf_{T\to\infty}\frac{\E[\regret(T)]}{\log T}
}\nn
&\ge \sum_{i: \De_i>0}\frac{\De_i}{
\inf_{(\mu,\si): \mu>\mu^*}
D(\normal(\mu_i,\si_i^2)\Vert\normal(\mu,\si^2))
}\com\label{bound_burnetas}
\end{align}
where $D(\cdot\Vert \cdot)$ is the KL divergence.
Since the KL divergence between normal distributions
is
\begin{align}
\lefteqn{
D(\normal(\mu_a,\si_b^2)\Vert \normal(\mu_a,\si_b^2))
}\nn
&=
\frac12\left(
\log \frac{\si_b^2}{\si_a^2}+\frac{\si_a^2+(\mu_b-\mu_a)^2}{\si_b^2}-1\n
\right)\com\n
\end{align}
the infimum in \eqref{bound_burnetas}
is expressed for $\mu_i<\mu^*$ as
\begin{align}
\frac12\log\left(1+\frac{(\mu^*-\mu_i)^2}{\si_i^2}\right)\per\n
\end{align}
Therefore, by letting
\begin{eqnarray}
\dmin(\De,\si^2)
=
\frac12\log\left(1+\frac{\De^2}{\si^2}\right)\com\n
\end{eqnarray}
we can rewrite the theoretical bound in \eqref{bound_burnetas} as
\begin{align}
\liminf_{T\to\infty}\frac{\E[\regret(T)]}{\log T}
&\ge \sum_{i: \De_i>0}\frac{\De_i}{
\dmin(\De_i,\si_i^2)}\per\label{bound_normal}
\end{align}

\subsection{Bayesian Theory and Thompson Sampling}\label{sect_bayes}
Thompson sampling is a policy based on the Bayesian viewpoint.
We mainly consider the prior $\pi(\mu_i,\si_i^2)\sim (\si_i^2)^{-1-\alpha}$,
or equivalently, $\pi(\mu_i,\si_i)\sim \si_i^{-1-2\al}$.
Since the density of the inverse gamma distribution is
\begin{align}
\frac{\beta^{\al}}{\Ga(\al)}x^{-1-\al}\e^{-\frac{\beta}{x}}\com\n
\end{align}
the above prior for $\si_i^2$ corresponds to this distribution with parameters $(\al,\beta)=(\alpha, 0)$.
The cases $\al=-1, -1/2,\allowbreak 0,1/2$ correspond to uniform for parameter $\si_i^2$,
uniform for parameter $\si_i$, reference and Jeffreys priors, respectively
(see, e.g.~\emcite{bayes_robert} for results on Bayesian theory
given in this section).
Under this prior, the posterior distribution is
\begin{align}
\pi(\mu_i|\samplet{i}{n})
&\sim
\left(1+\frac{n(\mu_i-\muhatt{i}{n})^2}{\vhatt{i}{n}}\right)^{-\frac{n}{2}-\alpha}\com\n
\end{align}
where $\samplet{i}{n}=(\muhatt{i}{n},\vhatt{i}{n})$.
Since the density of $t$-distribution with degree of freedom $\nu$ is
\begin{align}
f_{\nu}(x)=\frac{\Ga(\frac{\nu+1}{2})}{\sqrt{\nu\pi}\Ga(\frac{\nu}{2})}
\left(1+\frac{x^2}{\nu}\right)^{-\frac{\nu+1}{2}}\com\label{posterior1}
\end{align}
we see that
\begin{align}
\pi\left(\sqrt{\frac{n(n+2\alpha-1)}{\vhatt{i}{n}}}(\mu_i-\bar{x})
\Bigg| \samplet{i}{n}\right)
= f_{n+2\al-1}\per\label{posterior2}
\end{align}

Thompson sampling is the policy which chooses an
arm randomly according to the probability
with which the arm is the optimal when each $\mu_i$ is distributed
independently
by the posterior $\pi(\mu_i|\samplen{i}{t})$
for $\samplen{i}{t}=(\muhatn{i}{t}, \vhatn{i}{t})$.
This policy is formulated as Algorithm \ref{alg_thompson}.
Note that we require $\max\{2,2-\lceil 2\al \rceil\}$ initial pulls
to avoid improper posteriors.
We use $n_0=\max\{2,3-\lceil 2\al \rceil\}$ for simplicity of the analysis.
\begin{algorithm}[t]
\caption{Thompson Sampling}\label{alg_thompson}
\begin{algorithmic}
\sonomama{Parameter:} $\al\in\mathbb{R}$.
\sonomama{Initialization:} 
\textnormal{Pull each arm $n_0=\max\{2,3-\lceil 2\al\rceil\}$ times}. 
\sonomama{Loop:}
\mbox{}\\
\begin{enumerate}
\item Sample $\mut_i(t)$ from the posterior
$\pi_i(\mu_i|\samplen{i}{t})$
under prior $\pi(\mu_i,\si_i^2)\sim (\si_i^2)^{-1-\al}$
for each arm $i$.
\item
Pull an arm $i$ maximizing
$\mut_i(t)$.
\end{enumerate}
\end{algorithmic}
\end{algorithm}%


\section{Regret of Thompson Sampling}\label{sect_main}
In this section we give the main result of this paper.
First we show that TS achieves the asymptotic bound for the
prior $(\si_i^2)^{-1-\al}$ with $\al<0$.
\begin{theorem}\label{thm_possible}
Let $\ep>0$ be arbitrary
and assume that there is a unique optimal arm.
Under Thompson sampling with $\al<0$, the expected regret is bounded as
\begin{align}
\E[\mathrm{Regret}(T)]
&\le
\sum_{i:\De_i>0}\frac{\De_i\log T}{\dmin(\De_i,\si_i^2)}+\lo((\log T)^{4/5})\per\n
\end{align}
\end{theorem}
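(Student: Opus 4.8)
The plan is to use the standard regret decomposition $\E[\regret(T)]=\sum_{i:\De_i>0}\De_i\,\E[N_i(T+1)]$ and to prove, for each suboptimal arm $i$, the bound $\E[N_i(T+1)]\le \log T/\dmin(\De_i,\si_i^2)+\lo((\log T)^{4/5})$. Fix an intermediate threshold $x$ with $\mu_i<x<\mu^*$ and a margin $\de>0$ to be optimized later, and split every pull of arm $i$ according to whether the optimal sample $\mut_{i^*}(t)$ clears $x$:
\begin{align}
N_i(T+1)\le \sum_t \idx{J(t)=i,\,\mut_{i^*}(t)\le x}+\sum_t \idx{J(t)=i,\,\mut_i(t)>x}\per\n
\end{align}
For the first sum I would invoke the Agrawal--Goyal-type bookkeeping: writing $p_j=\pi(\mut_{i^*}>x\mid\samplet{i^*}{j})$ for the posterior probability that the optimal sample exceeds $x$ after $j$ observations of the optimal arm, the number of rounds in which arm $i$ is pulled while $\mut_{i^*}(t)\le x$ is at most $\sum_j\E[1/p_j-1]$, because between successive pulls of the optimal arm the suboptimal pulls form a geometric number of failures with success probability $p_j$.

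The second sum yields the leading term. Conditioning on $N_i(t)=n$ and the sufficient statistic $\samplet{i}{n}$, the event $\mut_i(t)>x$ has probability equal to an upper tail of the scaled $t$-distribution in \eqref{posterior2}; using the inequalities of Sect.\,\ref{sect_prob} this tail is $\e^{-n\dmin(x-\mu_i,\si_i^2)(1+\so(1))}$ whenever $\muhatt{i}{n}$ and $\vhatt{i}{n}/n$ are within $\de$ of $\mu_i$ and $\si_i^2$. Summing these per-round probabilities over $t$ and $n$ gives $\log T/\dmin(x-\mu_i,\si_i^2)$ up to lower-order terms, and the contribution of the atypical statistics (those deviating by more than $\de$) is controlled by Gaussian and chi-squared concentration. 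Letting $x\uparrow\mu^*$ replaces $x-\mu_i$ by $\De_i$; since widening the margin inflates the leading term by $\lo(\de\log T)$ while the concentration remainder grows like a negative power of $\de$, balancing the two is what I expect to produce the $\lo((\log T)^{4/5})$ remainder.

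The first sum is where the sign of $\al$ is decisive, and I expect it to be the main obstacle even though it contributes only $\lo(1)$ to the final bound. The quantity $1/p_j$ blows up exactly on the bad event that $\muhatt{i^*}{j}$ falls below $x$ while $\vhatt{i^*}{j}$ is atypically small, so that the posterior $t$-distribution --- though centred near $\mu^*$ --- concentrates below $x$. Bounding $\E[1/p_j]$ requires integrating the reciprocal tail of \eqref{posterior2} against the joint normal--chi-squared law \eqref{normal_density}; I expect the dangerous part of this integral, near $\vhatt{i^*}{j}=0$, to reduce to $\int_0 s^{-1-\al}\,\rd s$, which converges precisely when $\al<0$. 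This is the crux: the heavier posterior tail forced by a conservative prior keeps enough mass above $x$ for $\E[1/p_j]$ to be finite, whereas for $\al\ge0$ the same integral diverges and $\sum_j\E[1/p_j-1]$ blows up --- the very mechanism behind the non-optimality of the reference and Jeffreys priors established in Sect.\,\ref{sect_dame}. Combining the finiteness of $\E[1/p_j]$ for each $j$ with the large-deviation decay of the bad event as $j$ grows makes $\sum_j\E[1/p_j-1]$ a finite constant, so that the whole first sum is $\lo(1)$ and the stated bound follows.
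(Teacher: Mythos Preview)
Your plan is essentially the paper's proof: the three-way split into (i) rounds where the optimal arm's posterior sample fails to clear a threshold near $\mu^*$, (ii) rounds where a suboptimal sample clears that threshold under typical statistics, and (iii) rounds with atypical statistics, matches Lemmas~\ref{lem_difficult}--\ref{lem_easy}, and your Agrawal--Goyal bookkeeping for (i) is exactly the geometric argument leading to \eqref{matizikan}. Your diagnosis that $\E[1/p_j]$ is governed near $s=0$ by an integral of the form $\int_0 s^{-1-\al}\rd s$ is also correct; the paper makes this precise via the change of variables $(x,s)\mapsto(z,w)$ with $z=\tfrac12\bigl(s+n(x+\ep)^2\bigr)$ and $w=n(x+\ep)^2/\bigl(s+n(x+\ep)^2\bigr)$, which factors the double integral into $\Ga(n/2)\cdot\mathrm{B}(1/2,-\al)$.

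One point needs tightening, though: you cannot let $x\uparrow\mu^*$ and still treat the first sum as $\lo(1)$ independent of the threshold. The bound $\sum_j\E[1/p_j-1]<\infty$ holds for each fixed $x<\mu^*$, but the constant blows up as $\mu^*-x\to 0$; in the paper's normalization the threshold is $\mu^*-\ep$ and the first sum is $\lo(\ep^{-4})$, driven by $\sum_n n\e^{-n\ep^2/2}$ in \eqref{term1}. It is this $\ep^{-4}$, not the $\lo(\ep^{-2})$ concentration remainder from the atypical-statistics term, that must be balanced against the $\lo(\ep\log T)$ perturbation of the leading constant, and that balance is what produces the exponent $4/5$. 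So your two parameters $\mu^*-x$ and $\de$ should both be tied to a single scale $\ep\asymp(\log T)^{-1/5}$, rather than sending $x\to\mu^*$ first and optimizing $\de$ afterwards.
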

See Lemma \ref{lem_possible} for the specific representation of
the reminder term $\lo((\log T)^{4/5})$.
We see from this theorem that TS with $\al<0$ is asymptotically optimal in view of \eqref{bound_normal}.


Next we show that TS with $\al\ge 0$ cannot achieve the asymptotic bound.
To simplify the analysis we consider a two-armed setting more advantageous to the gambler
in which the full information on the arm 2 is known beforehand, that is,
the prior on the arm 2 is the unit point mass measure $\de_{\{(\mu_2,\si_2^2)\}}$
instead of $\pi(\mu_2,\si_2^2)\sim (\si_2^2)^{-1-\al}$.
\begin{theorem}\label{thm_impossible}
Assume that there are $K=2$ arms such that
$\mu_1>\mu_2$.
Then, under Thompson sampling such that
$\mut_1(t)\sim \pi(\mu_1 |\samplen{1}{t})$ with $\al\ge 0$
and $\mut_2(t)=\mu_2$,
there exists a constant $\xi>0$ independent of $\si_2$
such that
\begin{align}
\liminf_{T\to\infty}\frac{\E[\regret(T)]}{\log T}\ge \xi\per\label{cannot1}
\end{align}
In particular, if $\al>0$ then
there exist $\xi'>0$ and $\eta>0$ such that
\begin{align}
\liminf_{N\to\infty}\frac{\E[\regret(T)]}{T^{\eta}}\ge \xi'\per\label{cannot2}
\end{align}
\end{theorem}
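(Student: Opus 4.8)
The plan is to reduce the regret to the expected number of draws of the known inferior arm and then exhibit a \emph{trapping} mechanism. Since $\mu_1>\mu_2$, arm~$1$ is optimal and $\regret(T)=\De_2 N_2(T+1)$ with $\De_2=\mu_1-\mu_2>0$, so it suffices to lower bound $\E[N_2(T+1)]$. Arm~$2$ is drawn at round $t$ precisely when the posterior sample $\mut_1(t)$ falls below the known value $\mu_2$. Because arm~$2$'s ``sample'' is the deterministic value $\mu_2$, while $N_1(t)=n$ stays fixed the data $\samplet{1}{n}=(\muhatt{1}{n},\vhatt{1}{n})$ are frozen and $\mut_1$ is redrawn i.i.d.\ each round; hence the number of consecutive pulls of arm~$2$ at level $N_1=n$ is geometric with ``escape'' probability $1-q_n$, where by \eqref{posterior2}
\begin{align}
q_n = F_{n+2\al-1}\!\left(\sqrt{\frac{n(n+2\al-1)}{\vhatt{1}{n}}}\,(\mu_2-\muhatt{1}{n})\right)\com\n
\end{align}
and $F_\nu$ is the CDF of the $t$-distribution whose density $f_\nu$ is given in \eqref{posterior1}. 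Restricting to arm-2 pulls that occur while $N_1$ equals a single conveniently chosen small level, and capping the geometric count by the remaining horizon, bounds $\E[N_2(T+1)]$ below by the expectation over the frozen data of $\min\{1/(1-q_{n}),\,\Theta(T)\}$.

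The decisive step is to keep $n$ fixed at the initial value $n_0=\max\{2,3-\lceil 2\al\rceil\}$ and to examine the escape probability on the event $\muhatt{1}{n_0}<\mu_2$, which has probability bounded away from $0$ uniformly and independently of $\si_2$ (arm~$2$ is a point mass, so $\si_2$ never enters, and $\muhatt{1}{n_0}\sim\normal(\mu_1,\si_1^2/n_0)$ is independent of $\vhatt{1}{n_0}$). Writing $\nu=n_0+2\al-1$ and using the polynomial lower tail $F_\nu(-x)\asymp c_\nu x^{-\nu}$ of the $t$-distribution, the escape time satisfies
\begin{align}
\frac{1}{1-q_{n_0}}\asymp c_\nu^{-1}\left(\frac{n_0\nu}{\vhatt{1}{n_0}}\right)^{\nu/2}(\mu_2-\muhatt{1}{n_0})^{\nu}\com\n
\end{align}
which is enormous exactly when the sum of squares $\vhatt{1}{n_0}$ is atypically small: an overconfident (small-variance) posterior that happens to sit below $\mu_2$ is very slow to correct itself. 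Since $\vhatt{1}{n_0}/\si_1^2\sim\chi^2_{n_0-1}$ has density proportional to $s^{(n_0-3)/2}\e^{-s/2}$ near the origin, multiplying by the escape time $\propto s^{-\nu/2}$ yields an integrand proportional to $s^{(n_0-3-\nu)/2}=s^{-1-\al}$.

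It then remains to integrate $s^{-1-\al}$ against the near-origin chi-squared mass, cutting the escape time off at the horizon $T$. The escape time reaches $T$ at a threshold $s_*\asymp T^{-2/\nu}$, so up to the constant probability of $\{\muhatt{1}{n_0}<\mu_2\}$ and the $t$-tail constant,
\begin{align}
\E[N_2(T+1)]\gtrsim \int_{s_*}^{\lo(1)} s^{-1-\al}\,\rd s\per\n
\end{align}
For $\al=0$ this equals $\log(\lo(1)/s_*)\asymp \tfrac{2}{\nu}\log T$, giving \eqref{cannot1} with an explicit $\xi=\Theta(\De_2/\nu)$ independent of $\si_2$; for $\al>0$ it equals $\Theta(s_*^{-\al})=\Theta(T^{2\al/\nu})$, giving \eqref{cannot2} with $\eta=2\al/(n_0+2\al-1)>0$. (The reference prior $\al=0$ and Jeffreys prior $\al=1/2$ both give $\nu=2$, hence respectively a $\tfrac12\log T$-type bound and an $\eta=1/2$ polynomial bound.)

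The hard part is to make this trapping argument rigorous rather than heuristic. One must replace the geometric ``escape time'' by a genuine lower bound on the number of arm-2 pulls within the finite horizon, handling the truncation so as never to count beyond $T$ rounds and so that the freezing of $\samplet{1}{n_0}$ during a trapped stretch is exploited correctly. This needs a two-sided control of the tail $F_\nu(-x)$ that is uniform over the full range of $x$ relevant as $s\to 0$ (not merely the leading asymptotic), together with an honest treatment of the near-origin chi-squared mass, so that the divergence of $\int_0 s^{-1-\al}\,\rd s$ for $\al\ge 0$ is converted into the stated $\log T$ and $T^\eta$ growth. The borderline case $\al=0$, where the divergence is only logarithmic, will require the most care.
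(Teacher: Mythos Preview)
Your proposal is correct and follows essentially the same trapping argument as the paper: freeze $N_1$ at a fixed level $n\ge n_0$, bound the number of arm-2 pulls below by a truncated geometric with escape probability $p_n(\mu_2\mid\samplet{1}{n})$, and show that the expected escape time diverges because the $\chi^2_{n-1}$ density of $\vhatt{1}{n}$ near the origin, combined with the $t$-tail, produces an integrand of order $s^{-1-\al}$. The one technical refinement in the paper's execution of the ``hard part'' you flag is a change of variables $(x,s)\mapsto(z,w)$ with $w=n(\mu_2-x)^2/\bigl(s+n(\mu_2-x)^2\bigr)$, which decouples the double integral over $(\muhatt{1}{n},\vhatt{1}{n})$ into a gamma integral in $z$ times $\int_0^{C_T/(1+C_T)}(1-w)^{-1-\al}\,\rd w$, yielding the $\log T$ and $T^{\eta}$ growth directly without separate asymptotic control of the $t$-tail.
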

Eq.\,\eqref{cannot2} means that TS with $\al>0$ suffers a polynomial regret
in expectation.
Also note that
the asymptotic bound in \eqref{bound_normal} approaches
zero for sufficiently small $\si_2$ in the above two-armed setting
since $\dmin(\De_i, \si_i^2)\to \infty$ as $\si_i\to 0$.
Nevertheless, the LHS of \eqref{cannot1} does not go to
zero as $\si_2\to 0$ because $\xi>0$ is independent of $\si_2$.
Therefore TS with $\al= 0$ also does not achieve the asymptotic bound
at least for sufficiently small $\si_2$.

Recall that Jeffreys and reference priors correspond
to $\al=1/2$ and $\al=0$, respectively.
Therefore this theorem means that
TS with these non-informative priors
does not achieve the asymptotic bound.

\begin{remark}{\rm
Probability that the sample mean satisfies $\muhat_i<\mu$ for any $\mu<\mu_i$
becomes large when $\si_i^2$ is large.
Therefore the posterior probability that
the true expectation $\mu_i$ is larger than $\mu>\muhat_i$
becomes large when the prior has heavy weight at large $\si_i^2$,
that is, $\al$ is small.
As a result, as $\al$ decreases, TS becomes a ``conservative'' policy
which chooses a seemingly suboptimal arm frequently.
Theorems \ref{thm_possible} and \ref{thm_impossible} mean
that the prior should be conservative to some extent
and non-informative priors are too optimistic.
}\end{remark}

\begin{remark}{\rm
Although TS with non-informative priors does not achieve the asymptotic bound
in the sense of expectation,
this fact does not necessarily mean that these priors are ``bad'' ones.
As we can see from a close inspection of the proof of Theorem \ref{thm_impossible},
the expected regret of TS with these priors becomes large
because
an enormously large regret arises with fairly small probability.
Therefore this policy performs well except for the case arising with this small probability,
and the authors think that TS with these priors also becomes a good policy
in the probably approximately correct (PAC) framework.
In any case, we should be aware that these non-informative priors
are ``risky'' in the sense of expectation.
}\end{remark}

\section{Inequalities for normal distributions and t-distributions}\label{sect_prob}
In this section we derive fundamental inequalities
for distributions appearing in Thompson sampling
for the normal distribution model.
We prove them in Appendix.

First we give a simple inequality to evaluate the ratio of gamma functions
which appears in the densities of normal, chi-squared and $t$-distributions.
\begin{lemma}\label{lem_gamma}
For $z\ge 1/2$
\begin{align}
\e^{-2/3}\le \frac{\Ga(z+\frac12)}{\Ga(z)}\le \e^{1/6}\sqrt{z}\per\n
\end{align}
\end{lemma}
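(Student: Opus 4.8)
The plan is to establish the two inequalities separately, using log-convexity of the Gamma function for the upper bound and monotonicity of the ratio for the lower bound.

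For the upper bound I would invoke the log-convexity of $\Gamma$ on $(0,\infty)$. Applying midpoint convexity of $\log\Gamma$ to the two points $z$ and $z+1$, whose midpoint is $z+\tfrac12$, gives $\log\Gamma(z+\tfrac12)\le\tfrac12\log\Gamma(z)+\tfrac12\log\Gamma(z+1)$. Using the recurrence $\Gamma(z+1)=z\Gamma(z)$, the right-hand side equals $\log\Gamma(z)+\tfrac12\log z$, so that $\Gamma(z+\tfrac12)/\Gamma(z)\le\sqrt z$. Since $\e^{1/6}>1$, this immediately yields the claimed bound $\Gamma(z+\tfrac12)/\Gamma(z)\le\e^{1/6}\sqrt z$ with room to spare. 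The same estimate follows alternatively from Stirling's formula with the explicit Robbins remainder $0<\mu(z)<1/(12z)$, after writing the ratio as $\sqrt z\,(1+\tfrac1{2z})^{z}\e^{-1/2}\e^{\mu(z+1/2)-\mu(z)}$ and bounding each factor; this is the route that produces the constant $\e^{1/6}$ directly.

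For the lower bound I would show that $R(z):=\Gamma(z+\tfrac12)/\Gamma(z)$ is increasing on $[\tfrac12,\infty)$ and evaluate it at the left endpoint. Logarithmic differentiation gives $R'(z)/R(z)=\psi(z+\tfrac12)-\psi(z)$, where $\psi$ is the digamma function; since $\psi'(z)=\sum_{n\ge0}(z+n)^{-2}>0$, the digamma is strictly increasing, whence $R'(z)>0$. Therefore $R(z)\ge R(\tfrac12)=\Gamma(1)/\Gamma(\tfrac12)=1/\sqrt\pi$, and the proof is finished by the elementary check $1/\sqrt\pi=\e^{-\frac12\log\pi}\ge\e^{-2/3}$, which holds because $\tfrac12\log\pi<\tfrac23$.

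The delicate point is precisely the lower bound. The naive route through Stirling's formula is not tight enough near $z=\tfrac12$, where $\Gamma$ is poorly approximated: a crude two-sided Stirling estimate gives only $R(z)\ge\sqrt z\,\e^{-5/(24z)}$, which at $z=\tfrac12$ evaluates to roughly $\e^{-0.76}$ and fails to beat $\e^{-2/3}$. Routing the lower bound instead through the monotonicity of $R$ and its exact boundary value $1/\sqrt\pi$ sidesteps this difficulty entirely, reducing the whole bound to the harmless inequality $\tfrac12\log\pi<\tfrac23$.
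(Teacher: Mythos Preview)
Your proof is correct, and the route differs from the paper's. The paper handles both inequalities directly through the explicit Stirling bounds $\sqrt{2\pi}\,z^{z-1/2}\e^{-z}\le\Ga(z)\le\sqrt{2\pi}\,\e^{1/6}z^{z-1/2}\e^{-z}$ for $z\ge 1/2$, obtaining
\[
\e^{-2/3}\sqrt{z}\Bigl(1+\tfrac{1}{2z}\Bigr)^{z}\;\le\;\frac{\Ga(z+\tfrac12)}{\Ga(z)}\;\le\;\e^{-1/3}\sqrt{z}\Bigl(1+\tfrac{1}{2z}\Bigr)^{z},
\]
and then uses $(1+1/(2z))^z\le\e^{1/2}$ for the upper bound and the fact that $\sqrt{z}\,(1+1/(2z))^z$ is increasing with value $1$ at $z=1/2$ for the lower bound. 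Your log-convexity argument for the upper bound is cleaner and in fact sharper (it gives $\sqrt z$ without the $\e^{1/6}$), while your lower-bound argument via monotonicity of the ratio and the exact value $\Ga(1)/\Ga(\tfrac12)=1/\sqrt{\pi}$ is also more direct.

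One small correction to your commentary: your remark that ``the naive route through Stirling's formula is not tight enough near $z=\tfrac12$'' is true for the specific crude chain you wrote down (bounding $(1+1/(2z))^z$ below by $\e^{1/2-1/(8z)}$ and the Robbins remainder by $\e^{-1/(12z)}$), but it is not true in general. The paper's Stirling route \emph{does} succeed at $z=\tfrac12$ precisely because it keeps $(1+1/(2z))^z$ intact and observes that $\sqrt{z}\,(1+1/(2z))^z$ attains its minimum value $1$ there. So Stirling is viable; it just requires a slightly more careful handling of the $(1+1/(2z))^z$ factor than the second-order Taylor bound you tried.
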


Next we give
large deviation probabilities (see, e.g., \emcite{LDP}) for empirical means and variances.
\begin{lemma}\label{lem_ldp}
For any $\mu>\mu_i$
\begin{align}
\Pr[\muhatt{i}{n}\ge \mu]\le \e^{-\frac{n(\mu-\mu_i)^2}{2\si_i^2}}\label{ldp_mean}
\end{align}
and for any $\si^2>\si_i^2$
\begin{align}
\Pr[\vhatt{i}{n}\ge n\si^2]\le \e^{-nh\left(\frac{\si^2}{\si_i^2}\right)}\label{ldp_var}
\end{align}
where $h(x)=(x-1-\log x)/2\ge 0$.
\end{lemma}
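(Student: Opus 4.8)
The plan is to treat both inequalities as standard Chernoff (Cram\'er) bounds, exploiting the explicit moment generating functions that follow from \eqref{normal_density}.

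For \eqref{ldp_mean} I would start from $\muhatt{i}{n}\sim\normal(\mu_i,\si_i^2/n)$, so that for any $\lambda>0$ Markov's inequality applied to $\e^{\lambda\muhatt{i}{n}}$ gives
\begin{align}
\Pr[\muhatt{i}{n}\ge \mu]
\le \e^{-\lambda\mu}\E[\e^{\lambda\muhatt{i}{n}}]
= \e^{-\lambda(\mu-\mu_i)+\lambda^2\si_i^2/(2n)}\per\n
\end{align}
Since $\mu>\mu_i$, minimizing the exponent over $\lambda$ by the choice $\lambda=n(\mu-\mu_i)/\si_i^2>0$ immediately produces the claimed bound $\e^{-n(\mu-\mu_i)^2/(2\si_i^2)}$.

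For \eqref{ldp_var} the key observation is that the sample mean minimizes the sum of squared deviations, hence
\begin{align}
\vhatt{i}{n}=\sum_{m=1}^n(X_{i,m}-\muhatt{i}{n})^2
\le \sum_{m=1}^n(X_{i,m}-\mu_i)^2\per\n
\end{align}
The right-hand side is distributed as $\si_i^2$ times a $\chi_n^2$ variable, with $n$ (rather than $n-1$) degrees of freedom because we center at the true mean $\mu_i$. Writing $r=\si^2/\si_i^2>1$, it therefore suffices to bound $\Pr[\chi_n^2\ge nr]$. Using the moment generating function $\E[\e^{\lambda\chi_n^2}]=(1-2\lambda)^{-n/2}$, valid for $0<\lambda<1/2$, Markov's inequality gives $\Pr[\chi_n^2\ge nr]\le \e^{-\lambda nr}(1-2\lambda)^{-n/2}$, and optimizing with $\lambda=(r-1)/(2r)\in(0,1/2)$ collapses the exponent to exactly $-n(r-1-\log r)/2=-nh(r)$. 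The nonnegativity $h(x)\ge 0$ follows since $h(1)=0$ and $h'(x)=(1-1/x)/2$ changes sign only at $x=1$.

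Neither step is genuinely hard, as both are textbook large-deviation computations; the single point requiring care is the domination step in \eqref{ldp_var}. Bounding $\vhatt{i}{n}$ by the deviations from the true mean converts the awkward $\chi_{n-1}^2$ law into a clean $\chi_n^2$, and it is precisely this replacement that makes the subsequent optimization simplify to the exact rate function $h$. Working directly with $\chi_{n-1}^2$ would instead yield a messier expression involving $n-1$ that does not reduce to the stated closed form, so recognizing this reduction is the main—though modest—obstacle.
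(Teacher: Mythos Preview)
Your argument is correct. For \eqref{ldp_mean} your Chernoff bound on the Gaussian sample mean is exactly the one-dimensional Cram\'er computation the paper invokes, so there is no real difference there.

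For \eqref{ldp_var} your route diverges from the paper's. The paper applies the two-dimensional Cram\'er theorem to the pairs $Z_m=(X_{i,m},X_{i,m}^2)$: it computes the Fenchel--Legendre transform $\Lambda^*(z^{(1)},z^{(2)})=h\bigl((z^{(2)}-(z^{(1)})^2)/\si_i^2\bigr)+(z^{(1)}-\mu_i)^2/(2\si_i^2)$, checks that the constraint set $\{z^{(2)}-(z^{(1)})^2\ge\si^2\}$ is convex, and then minimizes $\Lambda^*$ over it. You instead use the pointwise domination $\vhatt{i}{n}\le\sum_m(X_{i,m}-\mu_i)^2$ to reduce to a one-dimensional Chernoff bound on a $\chi_n^2$ variable, whose MGF you optimize directly. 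Your approach is more elementary---no bivariate rate function, no convexity check---and the domination step you flagged is indeed the only nontrivial ingredient. The paper's approach, by contrast, handles the coupling between $\muhatt{i}{n}$ and $\vhatt{i}{n}$ inside the rate function and would extend more readily to joint deviations of the two statistics, though that generality is not needed here.
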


\begin{remark}{\rm
It is well known that
Mill's ratio \cite[Chap.\,5]{kendall} gives a tighter bound 
for the tail probability of normal distributions,
and similar technique can also be applied to the tail weight
of $\chi^2$ distributions.
However, we use bounds in Lemma \ref{lem_ldp} based
on the large deviation principle because they are simpler
and convenient for our analysis.
}\end{remark}

Finally we evaluate the posterior distribution of the mean
for Thompson sampling.
Probability that the sample from the posterior is larger than or equal to
$\mu$, which is formally defined as
\begin{align}
\pti{i}{n}{\mu}
=\int_{\mu}^{\infty}\pi(x|\samplet{i}{n})\rd x\com\n
\end{align}
is bounded as follows.
\begin{lemma}\label{lem_upper}
If $\mu>\muhatt{i}{n}$ and  $n\ge n_0$ then
\begin{align}
\pti{i}{n}{\mu}
&\ge
A_{n,\al}\left(1+\frac{n(\mu-\muhatt{i}{n})^2}{\vhatt{i}{n}}\right)^{-\frac{n-1}{2}-\al}\label{eq_lower}
\end{align}
and
\begin{align}
\pti{i}{n}{\mu}
&\le
\frac{\sqrt{\vhatt{i}{n}}}{\mu-\muhatt{i}{n}}
\left(1+\frac{n(\mu-\muhatt{i}{n})^2}{\vhatt{i}{n}}\right)^{-\frac{n}{2}-\al+1}\com\label{eq_upper}
\end{align}
where
\begin{align}
A_{n,\al}&=\frac{1}{2\e^{1/6}\sqrt{\pi(\frac{n}{2}+\al)}}\per\label{eq_an}
\end{align}
\end{lemma}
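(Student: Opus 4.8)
We need to bound the posterior tail probability $\pti{i}{n}{\mu}$ for the mean. From \eqref{posterior2}, the normalized quantity $\sqrt{n(n+2\al-1)/\vhatt{i}{n}}(\mu_i - \muhatt{i}{n})$ follows a $t$-distribution with $\nu = n+2\al-1$ degrees of freedom. So the plan is to translate the tail probability $\pti{i}{n}{\mu}$ into a tail integral of the standard $t$-density $f_\nu$ in \eqref{posterior1}, and then bound that integral from above and below by elementary means.

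Let me set up the change of variables. Writing $c_n = \sqrt{n(n+2\al-1)/\vhatt{i}{n}}$ and $y = c_n(x - \muhatt{i}{n})$, the event $\{x \ge \mu\}$ becomes $\{y \ge y_0\}$ where $y_0 = c_n(\mu - \muhatt{i}{n}) > 0$ since $\mu > \muhatt{i}{n}$. Thus $\pti{i}{n}{\mu} = \int_{y_0}^\infty f_\nu(y)\,\rd y$ with $\nu = n+2\al-1$. Note that $1 + y_0^2/\nu = 1 + n(\mu-\muhatt{i}{n})^2/\vhatt{i}{n}$, which is exactly the base appearing in both bounds \eqref{eq_lower} and \eqref{eq_upper}, so the $(1+\cdots)$ factors will emerge naturally from the integrand.

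For the lower bound \eqref{eq_lower}, the standard trick is to restrict the integral to a bounded window, say $y \in [y_0, y_0 + \text{const}]$, and lower-bound $f_\nu(y)$ on that window. Since $(1 + y^2/\nu)^{-(\nu+1)/2}$ is decreasing in $y$, on any interval of fixed length its smallest value is at the right endpoint; choosing the window length so that $1 + (y_0+\ell)^2/\nu \le 2(1 + y_0^2/\nu)$ (which a length of order $\sqrt{\nu}$ achieves) keeps the power factor within a constant of $(1+y_0^2/\nu)^{-(\nu+1)/2}$. The leading constant $A_{n,\al}$ then comes from the normalizing constant $\Ga(\frac{\nu+1}{2})/(\sqrt{\nu\pi}\,\Ga(\frac{\nu}{2}))$, which is handled by Lemma \ref{lem_gamma} applied with $z = \nu/2 = (n+2\al-1)/2$: the upper bound $\Ga(z+\tfrac12)/\Ga(z) \le \e^{1/6}\sqrt{z}$ gives a bound $\le \e^{1/6}\sqrt{\nu/2}/\sqrt{\nu\pi}$ on the normalizing constant, and after multiplying by the window length $\sim\sqrt{\nu}$ and the factor $\frac12$ one recovers $A_{n,\al} = 1/(2\e^{1/6}\sqrt{\pi(n/2+\al)})$. (One must verify $\tfrac12 + \al - \tfrac12 \cdot (\text{exponent bookkeeping})$ matches $\frac{n-1}{2}+\al$; the half-integer shift $\frac{\nu+1}{2} = \frac{n}{2}+\al$ versus the claimed exponent $\frac{n-1}{2}+\al$ is absorbed by converting the bounded-window estimate into the single power, and this exponent bookkeeping is the step I would check most carefully.)

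For the upper bound \eqref{eq_upper}, I would bound the tail integral by comparing $f_\nu(y)$ against a dominating integrable function. A clean route is $\int_{y_0}^\infty (1+y^2/\nu)^{-(\nu+1)/2}\,\rd y \le \int_{y_0}^\infty \frac{y}{y_0}(1+y^2/\nu)^{-(\nu+1)/2}\,\rd y$, which integrates explicitly (substituting $u = 1+y^2/\nu$) to $\frac{\nu}{(\nu-1)y_0}(1+y_0^2/\nu)^{-(\nu-1)/2}$; the exponent $(\nu-1)/2 = \frac{n}{2}+\al-1$ then matches \eqref{eq_upper}. The remaining constant and the $\sqrt{\vhatt{i}{n}}/(\mu-\muhatt{i}{n})$ prefactor come from $\nu/((\nu-1)y_0)$ together with $c_n^{-1} = \sqrt{\vhatt{i}{n}/(n(n+2\al-1))}$ and a second application of Lemma \ref{lem_gamma} (the lower bound $\e^{-2/3} \le \Ga(z+\tfrac12)/\Ga(z)$) to control the normalizing constant from above.

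The main obstacle I anticipate is keeping the half-integer degree-of-freedom shifts and the three exponents ($\frac{n-1}{2}+\al$, $\frac{n}{2}+\al$, $\frac{n}{2}+\al-1$) consistent across the two bounds while applying Lemma \ref{lem_gamma} in the correct direction for each. The integration steps themselves are routine; the delicate part is the constant-chasing that produces exactly $A_{n,\al}$ and exactly the prefactor $\sqrt{\vhatt{i}{n}}/(\mu-\muhatt{i}{n})$, and ensuring the bounds hold uniformly for all $n \ge n_0$ (so that the normalizing-constant estimates remain valid even at the smallest admissible degrees of freedom).
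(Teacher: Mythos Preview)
Your upper-bound argument is essentially the paper's: inserting $y/y_0\ge 1$ on $[y_0,\infty)$ is equivalent to the paper's integration-by-parts with $u=1/x$, and the same application of Lemma~\ref{lem_gamma} (the lower bound $\e^{-2/3}\le \Ga(z+\tfrac12)/\Ga(z)$) finishes the constant-chasing to obtain the prefactor $\sqrt{\vhatt{i}{n}}/(\mu-\muhatt{i}{n})$.

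Your lower-bound argument, however, has a real gap. You propose to restrict to a window $[y_0,y_0+\ell]$ with $\ell$ of order $\sqrt{\nu}$ chosen so that $1+(y_0+\ell)^2/\nu\le 2(1+y_0^2/\nu)$, and claim this ``keeps the power factor within a constant'' of $(1+y_0^2/\nu)^{-(\nu+1)/2}$. But a factor of $2$ in the \emph{base} becomes $2^{-(\nu+1)/2}$ in the power, which is exponentially small in $n$, not a constant. No window length gives the required uniform bound: to keep the power within a genuine constant you need the base ratio to be $1+\lo(1/\nu)$, which forces $\ell=\lo(1/y_0)$ for large $y_0$, and then the window contribution is too small to match the target exponent $-(n-1)/2-\al$ (one power \emph{larger} than $-(\nu+1)/2$).

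The paper avoids this entirely by a one-line algebraic trick: write
\[
\left(1+\frac{x^2}{\nu}\right)^{-\frac{n}{2}-\al}
=\left(1+\frac{x^2}{\nu}\right)^{1/2}\left(1+\frac{x^2}{\nu}\right)^{-\frac{n+1}{2}-\al}
\ge \frac{x}{\sqrt{\nu}}\left(1+\frac{x^2}{\nu}\right)^{-\frac{n+1}{2}-\al},
\]
using $\sqrt{1+u^2}\ge u$. The resulting integrand is exactly integrable on $[x_0,\infty)$, and the antiderivative produces the exponent $-(n-1)/2-\al$ directly, together with a factor $\tilde A/\sqrt{\nu}=\Ga(\tfrac{n}{2}+\al)/\bigl(2\sqrt{\pi}\,\Ga(\tfrac{n+1}{2}+\al)\bigr)$; Lemma~\ref{lem_gamma} with $z=\tfrac{n}{2}+\al$ then gives $A_{n,\al}$ on the nose. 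This is the idea your plan is missing for \eqref{eq_lower}.
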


\section{Analysis for Conservative Priors}\label{sect_regret}
In this section we show that
Thompson sampling achieves the asymptotic bound if $\al<0$.
The main result of this section is given as follows.
\begin{lemma}\label{lem_possible}
Fix any $\al<0$ and assume that $(\mu_1,\si_1^2)=(0,1)$
and the arm 1 is the unique optimal arm.
Then, for any
$\ep<\min_{i:\De_i>0}\De_i/2$,
\begin{align}
\lefteqn{
\E[\mathrm{Regret}(T)]
}\nn
&\le
\sum_{i: \De_i>0}\De_i
\Bigg( \frac{\log T}{\dmin(\De_i-2\ep,\si_i^2+\ep)}+2-2\al
\nn
&\qquad\qquad+\frac{\sqrt{\si_i^2+\ep}}{\De_i-2\ep}
+\wa{\frac{\ep}{2\si_i^2}}+\wa{h(1+\frac{\ep}{\si_i^2})}
\Bigg)
\nn
&\quad+\De_{\max}\Bigg(
\wa{\frac{\ep^2}{2}}+\wa{h(2)}
+\frac{\mathrm{B}(1/2,-\al)}{(1-\e^{-\frac{\ep^2}{2}})^2}\nn
&\phantom{wwwwwwwwi}+
\frac{2\sqrt{2}}{\ep}
\frac{\left(1+\ep^2/8\right)^{1-\al}}{1-\left(1+\ep^2/8\right)^{-1/2}}
\Bigg)
\nn
&=
\sum_{i: \De_i>0}
\frac{\De_i\log T}{\dmin(\De_i-\ep,\si_i^2+\ep)}
+\lo(\ep^{-4})\com\n
\end{align}
where $\De_{\max}=\max_i \De_i$ and $\mathrm{B}(\cdot,\cdot)$ is the beta function.
\end{lemma}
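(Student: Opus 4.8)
The plan is to bound the expected number of pulls $\E[N_i(T+1)]$ for each suboptimal arm $i$ by the dominant logarithmic term plus lower-order corrections, and then multiply by $\De_i$ and sum. The pull of a suboptimal arm $i$ at round $t$ happens only when the Thompson sample $\mut_i(t)$ exceeds $\mut_1(t)$, and in particular when it exceeds the mean $\mu_1=0$ of the optimal arm. I would decompose the event $\{J(t)=i\}$ according to whether the empirical statistics of arm $i$ have concentrated near their true values. Specifically, I would introduce a threshold level $\mu$ slightly below $\mu^*$ and a number of pulls $n$; once arm $i$ has been pulled enough times that $\muhatn{i}{t}$ and $\vhatn{i}{t}$ are close to $\mu_i$ and $\si_i^2$, the upper bound \eqref{eq_upper} on the posterior tail $\pti{i}{n}{\mu}$ forces the probability of sampling above $\mu^*$ to decay like $\bigl(1+n(\mu-\muhatt{i}{n})^2/\vhatt{i}{n}\bigr)^{-n/2-\al+1}$, whose exponent matches $n\dmin(\De_i,\si_i^2)$ up to the $\ep$-perturbations in the arguments.

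Concretely, I would split $\E[N_i(T+1)]$ into three contributions. The first is the main term: conditioned on the empirical mean and variance of arm $i$ lying in a good set (roughly $\muhatt{i}{n}\le \mu_i+\ep$ and $\vhatt{i}{n}\le n(\si_i^2+\ep)$), I sum the per-round sampling probability from \eqref{eq_upper} over $n$ and bound it by a geometric-type series. The ratio test on consecutive terms produces the factor $(1+\ep^2/8)^{1-\al}/(1-(1+\ep^2/8)^{-1/2})$ and the $\dmin(\De_i-2\ep,\si_i^2+\ep)$ denominator in the $\log T$ coefficient; the constant $2-2\al$ and the $\sqrt{\si_i^2+\ep}/(\De_i-2\ep)$ term absorb the leading prefactor $\sqrt{\vhatt{i}{n}}/(\mu-\muhatt{i}{n})$ and the first few indices. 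The second contribution controls the complementary bad events for arm $i$, where either its sample mean or its sum of squares deviates too far; here Lemma \ref{lem_ldp} gives the exponential bounds $\e^{-n\ep^2/(2\si_i^2)}$ and $\e^{-nh(1+\ep/\si_i^2)}$, and summing the geometric series yields the $\wa{\ep/(2\si_i^2)}$ and $\wa{h(1+\ep/\si_i^2)}$ terms.

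The third and most delicate contribution handles the optimal arm itself: a suboptimal arm can be pulled not only because its sample is large but also because the optimal arm's sample $\mut_1(t)$ is anomalously small. I would bound the expected number of rounds in which $\mut_1(t)$ falls below $\mu^*-\ep$ (equivalently, below a level that makes suboptimal samples competitive) by conditioning on the empirical statistics of arm 1 and applying the lower bound \eqref{eq_lower} on its posterior tail: when arm 1 has concentrated, the probability that its sample drops low is small, so the expected waiting time until arm 1 is sampled high is controlled. This is where the $\De_{\max}$ block arises — the factors $\mathrm{B}(1/2,-\al)/(1-\e^{-\ep^2/2})^2$ and the final $(2\sqrt2/\ep)(1+\ep^2/8)^{1-\al}/(1-(1+\ep^2/8)^{-1/2})$ come from integrating the $t$-distribution tail of arm 1 against the large-deviation bounds for its empirical mean and variance, with the beta function entering through the normalizing constant $A_{n,\al}$ and the heavy tail of the posterior when $\al<0$.

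I expect the main obstacle to be this third step, the coupling between the two arms. Unlike the per-arm analysis, which factorizes cleanly, bounding the regret caused by an underperforming optimal arm requires controlling a sum over rounds $t$ of the probability that arm 1 is sampled low \emph{and} some suboptimal arm is sampled high, without double-counting and without a clean independence across rounds (the counts $N_i(t)$ are themselves random and history-dependent). The standard device is to bound the expected number of rounds in which arm 1's posterior tail probability above $\mu^*-\ep$ is too small by a sum of per-pull failure probabilities; making this rigorous needs the lower bound \eqref{eq_lower} together with a careful union over the possible values of $N_1(t)$, and then showing the resulting double series converges to the stated $\De_{\max}$ expression. Verifying that all the correction terms collapse to $\lo(\ep^{-4})$ and that the leading coefficient genuinely reduces to $\De_i/\dmin(\De_i,\si_i^2)$ as $\ep\to 0$ (after the final choice $\ep \sim (\log T)^{-1/5}$ that yields the $\lo((\log T)^{4/5})$ remainder in Theorem \ref{thm_possible}) is then a matter of bookkeeping on the elementary inequalities already established.
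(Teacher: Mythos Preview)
Your three-way decomposition is exactly the paper's: your three contributions correspond to Lemmas~\ref{lem_main}, \ref{lem_easy}, and \ref{lem_difficult} respectively, and you have correctly flagged the third (the optimal arm's sample falling low) as the crux, including the role of $\mathrm{B}(1/2,-\al)$ and the necessity of $\al<0$.

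Two bookkeeping slips are worth fixing before you write out details. First, the factor $(1+\ep^2/8)^{1-\al}/\bigl(1-(1+\ep^2/8)^{-1/2}\bigr)$ does \emph{not} arise in the main-term analysis for arm $i$; it belongs solely to the $\De_{\max}$ block. The $\ep^2/8$ comes from the event $\{\muhatt{1}{n}>-\ep/2,\ \vhatt{1}{n}\le 2n\}$ for arm~$1$ (which has $\si_1^2=1$), giving $n(\ep/2)^2/(2n)=\ep^2/8$ inside the bound \eqref{eq_upper} applied to $1-\pti{1}{n}{-\ep}$; it has nothing to do with arm $i$'s parameters. Second, the main term is not obtained by summing a geometric series over $n$---that sum would converge to a constant and would not produce the $\log T$ coefficient. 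The paper instead uses a threshold argument over rounds: split at $n_i=(\log T)/\dmin(\De_i-2\ep,\si_i^2+\ep)+2-2\al$, bound the first $n_i$ pulls trivially, and for $N_i(t)\ge n_i$ bound the per-round probability via \eqref{eq_upper} so that multiplying by $T$ leaves exactly the residual $\sqrt{\si_i^2+\ep}/(\De_i-2\ep)$. With these corrections your outline matches the paper's proof.
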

\begin{corollary}\label{cor_possible}
Under the same assumption as Lemma \ref{lem_possible},
\begin{align}
\E[\regret(T)]\le\sum_{i: \De_i>0}
\frac{\De_i\log T}{\dmin(\De_i,\si_i^2)}
+\lo((\log T)^{4/5})\per\n
\end{align}
\end{corollary}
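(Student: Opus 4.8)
The plan is to derive the corollary from Lemma \ref{lem_possible} by optimizing the free parameter $\ep$ as a function of $T$. The lemma already supplies, for every admissible $\ep$, the bound
\[
\E[\regret(T)] \le \sum_{i:\De_i>0}\frac{\De_i\log T}{\dmin(\De_i-\ep,\si_i^2+\ep)} + \lo(\ep^{-4}),
\]
where, once the instance is fixed (so that $(\mu_1,\si_1^2)=(0,1)$ and the finitely many constants $\De_i,\si_i^2$ are fixed), the hidden constant in $\lo(\ep^{-4})$ depends only on $\al$, $\De_{\max}$ and $\mathrm{B}(1/2,-\al)$ and \emph{not} on $T$. This is clear from the explicit remainder in the lemma: the dominant term $\mathrm{B}(1/2,-\al)/(1-\e^{-\ep^2/2})^2$ behaves like a constant times $\ep^{-4}$, while all other correction terms are of lower order in $\ep^{-1}$. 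Hence the only $T$-dependence enters through $\log T$ and through our choice of $\ep$.

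First I would control the gap between $1/\dmin(\De_i-\ep,\si_i^2+\ep)$ and the target $1/\dmin(\De_i,\si_i^2)$. Since $\dmin(\De,\si^2)=\frac12\log(1+\De^2/\si^2)$ is smooth and strictly positive in a neighborhood of each $(\De_i,\si_i^2)$ (because $\De_i>0$ and $\si_i^2>0$), the reciprocal $1/\dmin$ is Lipschitz there, and both perturbations $\De_i\to\De_i-\ep$ and $\si_i^2\to\si_i^2+\ep$ are of size $\lo(\ep)$, giving
\[
\frac{1}{\dmin(\De_i-\ep,\si_i^2+\ep)} = \frac{1}{\dmin(\De_i,\si_i^2)} + \lo(\ep)
\]
uniformly for small $\ep$. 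Multiplying by $\De_i\log T$ and summing over the finitely many suboptimal arms yields
\[
\sum_{i:\De_i>0}\frac{\De_i\log T}{\dmin(\De_i-\ep,\si_i^2+\ep)} = \sum_{i:\De_i>0}\frac{\De_i\log T}{\dmin(\De_i,\si_i^2)} + \lo(\ep\log T).
\]

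Combining the displays, the bound becomes
\[
\E[\regret(T)] \le \sum_{i:\De_i>0}\frac{\De_i\log T}{\dmin(\De_i,\si_i^2)} + \lo(\ep\log T) + \lo(\ep^{-4}),
\]
and it remains only to balance the two remainders $\ep\log T$ and $\ep^{-4}$. Setting them of the same order requires $\ep^5\asymp 1/\log T$, i.e. the choice $\ep=(\log T)^{-1/5}$, which makes each remainder of order $(\log T)^{4/5}$ and hence gives the claimed $\lo((\log T)^{4/5})$ term.

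I do not expect a serious obstacle, since the substance of the argument lives entirely in Lemma \ref{lem_possible} and the corollary is a routine optimization of $\ep$. The one point requiring care is the admissibility constraint $\ep<\min_{i:\De_i>0}\De_i/2$ imposed in the lemma: because the chosen $\ep=(\log T)^{-1/5}\to 0$, this constraint holds for all sufficiently large $T$, and the finitely many small-$T$ cases are absorbed into the $\lo(\cdot)$ term. The only thing one must genuinely verify is that the constant hidden in $\lo(\ep^{-4})$ is independent of $T$, which, as noted above, is immediate from the explicit form of the remainder in Lemma \ref{lem_possible}.
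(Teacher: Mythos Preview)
Your proposal is correct and is precisely the argument the paper intends: it states that the corollary is ``straightforward from Lemma \ref{lem_possible} with $\ep:=\lo((\log T)^{-1/5})$,'' and your expansion of the Lipschitz step for $1/\dmin$ together with the balancing $\ep\log T\asymp \ep^{-4}$ is exactly how that one-line justification unpacks.
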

This corollary is straightforward from Lemma \ref{lem_possible}
with $\ep:=\lo((\log T)^{-1/5})$.

Note that $\dmin(\mu^*-\mu_i, \si_2^2)$ is invariant under the location and scale transformation,
that is,
\begin{align}
\dmin(\mu^*-\mu_i, \si_i^2)=\dmin\left(\frac{\mu^*-a}{b}-\frac{\mu_i-a}{b}, \frac{\si_i^2}{b^2}  \right).\n
\end{align}
Thus Theorem \ref{thm_possible} easily follows from
Corollary \ref{cor_possible}
by the
transformation
$((\mu_1,\si_1^2),(\mu_2,\si_2^2),\cdots,(\mu_K,\si_K^2))\mapsto
((0,1),\allowbreak
((\mu_2-\mu_1)/\si_1,\si_2^2/\si_1^2),\cdots,
((\mu_K-\mu_1)/{\si_1},\allowbreak \si_K^2/\si_1^2))$.

\begin{proof}[Proof of Lemma \ref{lem_possible}]
Define events
\begin{align}
\eA(t)&=\{\mut^*(t)\ge -\ep\}\com\nn
\eB_i(t)&=\{\muhatn{i}{t}\le \mu_i+\de,\, \vhatn{i}{t}\le n(\si_i^2+\ep)\}\com\n
\end{align}
where $\mut^*(t)=\max_{i}\mut_i(t)$.
Then the regret at the round $T$
is bounded as
\begin{align}
\lefteqn{
\!\!\!
\mathrm{Regret}(T)
}\nn
&=
\sum_{t=1}^T \De_{J(t)}\nn
&\le
\De_{\max}\sum_{t=Kn_0+1}^T \idx{J(t)\neq 1\scap \eA^c(t)}\nn
&\quad+\sum_{i=2}^K \De_i
\left(n_0+\sum_{t=Kn_0+1}^T\idx{J(t)=i\scap \eA(t)}\right)\nn
&\le
\De_{\max}\sum_{t=Kn_0+1}^T \idx{J(t)\neq 1\scap \eA^c(t)}\nn
&\quad+
\sum_{i=2}^K\De_i\Bigg(
\sum_{t=Kn_0+1}^T\idx{J(t)=i\scap \eA(t)\scap \eB_i(t)}\nn
&\qquad\qquad+\sum_{t=Kn_0+1}^T\idx{J(t)=i\scap \eB_i^c(l)}+n_0\Bigg)
\com\label{bunkai}
\end{align}
where $\idx{\cdot}$ is the indicator function and
the superscript ``$c$'' denotes the complementary set.
In the following Lemmas \ref{lem_difficult}--\ref{lem_easy}
we bound the expectation of the above three terms
and the proof is completed.
\end{proof}
\begin{lemma}\label{lem_difficult}
If $\al<0$ then
\begin{align}
\lefteqn{
\E\left[\sum_{t=Kn_0+1}^T \idx{J(t)\neq 1 \cup \eA^c(t)}\right]
}\nn
&\le
\frac{1}{1-\e^{-\frac{\ep^2}{8}}}
+\frac{1}{1-\e^{-h(2)}}
+\frac{2\sqrt{2}}{\ep}
\frac{\left(1+\frac{\ep^2}{8}\right)^{1-\al}}{1-\left(1+\frac{\ep^2}{8}\right)^{-1/2}}\nn
&\quad+
\frac{\mathrm{B}(1/2,-\al)}{\left(1-\e^{-\frac{\ep^2}{2}}\right)^2}\nn
&=\lo(\ep^{-4})\per\n
\end{align}
\end{lemma}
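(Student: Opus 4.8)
The plan is to reduce the count of bad rounds to a sum over the number of pulls of the optimal arm, and then to bound each summand by a case analysis on the empirical statistics of arm~$1$, invoking Lemmas~\ref{lem_ldp} and~\ref{lem_upper}. Write $M(t)=\max_{i\neq 1}\mut_i(t)$ for the largest posterior sample among the suboptimal arms, which is independent of $\mut_1(t)$ given the history. I would first note that every counted round satisfies $M(t)<-\ep$: on $\{J(t)\neq 1,\ \eA^c(t)\}$ the pulled arm is suboptimal, so $\mut_1(t)\le M(t)$, and $\eA^c(t)$ (i.e.\ $\max\{\mut_1(t),M(t)\}<-\ep$) forces $M(t)<-\ep$; conversely $M(t)\ge -\ep$ would give $\eA(t)$. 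On these ``relevant'' rounds, $M(t)<-\ep$ shows that whenever $\mut_1(t)\ge -\ep$ arm~$1$ is the argmax and is pulled. Hence, conditionally on the history with $N_1(t)=n$, a relevant round pulls arm~$1$ with probability at least $P_n:=\pti{1}{n}{-\ep}$ and is counted with probability at most $1-P_n$, these being the only two possibilities.

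Next I would group the relevant rounds by the frozen value $n=N_1(t)$. Inside such a block the statistics $\samplet{1}{n}$, hence $P_n$, are fixed while the samples $\mut_1(t)$ are drawn afresh and i.i.d.; the block ends as soon as arm~$1$ is pulled, which happens no later than the first round on which $\mut_1(t)\ge -\ep$. Comparing with i.i.d.\ $\mathrm{Bernoulli}(P_n)$ trials, the number of counted rounds in the block is dominated by the number of failures preceding the first success, so its conditional expectation is at most $(1-P_n)/P_n$. Summing over $n\ge n_0$ gives
\begin{align}
\E\left[\sum_{t=Kn_0+1}^T\idx{J(t)\neq 1,\ \eA^c(t)}\right]
\le \sum_{n\ge n_0}\E\left[\frac{1-P_n}{P_n}\right].\n
\end{align}

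It then remains to bound $\E[(1-P_n)/P_n]$ by splitting on the quality of $(\muhatt{1}{n},\vhatt{1}{n})$, which under $(\mu_1,\si_1^2)=(0,1)$ are independent with laws $\normal(0,1/n)$ and $\chi^2_{n-1}$. On $\{\muhatt{1}{n}>-\ep/2,\ \vhatt{1}{n}\le 2n\}$ the threshold $-\ep$ lies well below the posterior center, so \eqref{eq_upper}, applied by symmetry to the left tail, bounds $1-P_n$ by a geometrically decaying factor in $n$ with ratio $(1+\ep^2/8)^{-1/2}$, producing the third term. The two large-deviation terms come from Lemma~\ref{lem_ldp}: $\Pr[\muhatt{1}{n}\le -\ep/2]\le \e^{-n\ep^2/8}$ governs the event that the empirical mean is too small and $\Pr[\vhatt{1}{n}>2n]\le \e^{-nh(2)}$ that the empirical variance is too large, each summing geometrically to the first and second terms. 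In the remaining deep region $\{\muhatt{1}{n}\le -\ep\}$, where $P_n$ can be small, I would use $(1-P_n)/P_n\le 1/P_n$, lower-bound $P_n$ by \eqref{eq_lower}, and integrate $A_{n,\al}^{-1}\,(1+n(\muhatt{1}{n}+\ep)^2/\vhatt{1}{n})^{(n-1)/2+\al}$ against the joint Gaussian--chi-squared density, which after a change of variables collapses to a Beta integral and yields the fourth term $\mathrm{B}(1/2,-\al)/(1-\e^{-\ep^2/2})^2$.

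The hard part will be this last region. The factor $1/P_n$ blows up as $\muhatt{1}{n}\to-\infty$, and only the exponential decay of the density tames it; carrying out the integration so that the powers of $n$ cancel and the outer sum remains finite is the delicate step. Crucially, the resulting constant $\mathrm{B}(1/2,-\al)$ is finite exactly because $-\al>0$, so this is precisely where the hypothesis $\al<0$ enters, and its breakdown for $\al\ge0$ foreshadows Theorem~\ref{thm_impossible}. A secondary point requiring care is the round-to-pull reduction: one must verify that the per-round probability of pulling arm~$1$ is bounded below by $P_n$ uniformly, independently of the evolving suboptimal arms, so that the geometric domination is legitimate.
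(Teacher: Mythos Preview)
Your proposal is correct and follows essentially the same route as the paper: the same reduction to $\sum_{n\ge n_0}\E[(1-P_n)/P_n]$ via a geometric waiting-time argument on the block $\{t:N_1(t)=n\}$, the same four-way split on $(\muhatt{1}{n},\vhatt{1}{n})$ handled by Lemma~\ref{lem_ldp}, the symmetry trick with \eqref{eq_upper}, and the \eqref{eq_lower}-plus-change-of-variables computation producing $\mathrm{B}(1/2,-\al)$ in the deep region. The paper's partition is literally $\{-\ep<\muhatt{1}{n}\le-\ep/2\}$, $\{-\ep/2<\muhatt{1}{n},\ \vhatt{1}{n}\ge 2n\}$, $\{-\ep/2<\muhatt{1}{n},\ \vhatt{1}{n}<2n\}$, $\{\muhatt{1}{n}\le-\ep\}$, using $P_n\ge 1/2$ on $\{\muhatt{1}{n}\ge-\ep\}$ to turn $(1-P_n)/P_n$ into a probability bound in the first two regions; you should make that step explicit, since as written your ``$\Pr[\muhatt{1}{n}\le-\ep/2]$'' region overlaps the deep region where $(1-P_n)/P_n$ is not bounded by~$1$.
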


\begin{lemma}\label{lem_main}
For any $i\neq 1$,
\begin{align}
\lefteqn{
\E\left[\sum_{t=Kn_0+1}^T\idx{J(t)=i\scap \eA(t)\scap \eB_i(t)}\right]
}\nn
&\le
\frac{\log N}{\dmin(\De_i-2\ep,\si_i^2+\ep)}
+2-2\al
+\frac{\sqrt{\si_i^2+\ep}}{\De_i-2\ep}\nn
&=\frac{\log N}{\dmin(\De_i-2\ep,\si_i^2+\ep)}+\lo(1)
\per\n
\end{align}
\end{lemma}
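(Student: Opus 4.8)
The plan is to control the restricted count by first reducing the event $\{J(t)=i\}\cap\eA(t)$ to a statement purely about arm $i$'s own posterior sample, then bounding the resulting posterior tail through Lemma \ref{lem_upper} together with the monotonicity of $\dmin$, and finally converting the per-round tail bound into a per-pull count. First I would observe that on $\{J(t)=i\}$ the sampled value $\mut_i(t)$ equals $\mut^*(t)$, so together with $\eA(t)=\{\mut^*(t)\ge-\ep\}$ we obtain $\mut_i(t)\ge-\ep$; hence the indicator in the lemma is dominated by $\idx{J(t)=i,\;\mut_i(t)\ge-\ep,\;\eB_i(t)}$. Writing $n=N_i(t)$ and recalling $\mu_i=-\De_i$, on $\eB_i(t)$ we have $-\ep-\muhatt{i}{n}\ge\De_i-2\ep$ and $\vhatt{i}{n}\le n(\si_i^2+\ep)$.

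Next I would feed these into the upper bound \eqref{eq_upper} with threshold $\mu=-\ep$. Since $x\mapsto\dmin(x,\cdot)$ is increasing and $v\mapsto\dmin(\cdot,v)$ is decreasing, the base of the power in \eqref{eq_upper} satisfies $1+\frac{n(-\ep-\muhatt{i}{n})^2}{\vhatt{i}{n}}\ge 1+\frac{(\De_i-2\ep)^2}{\si_i^2+\ep}$, so, writing $d=\dmin(\De_i-2\ep,\si_i^2+\ep)$, the conditional probability of drawing $\mut_i(t)\ge-\ep$ is at most
\[
\pti{i}{n}{-\ep}\le\frac{\sqrt{n(\si_i^2+\ep)}}{\De_i-2\ep}\,\e^{-(n+2\al-2)d}
\]
on the event $\eB_i(t)$. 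This is precisely the step where Lemma \ref{lem_upper} and the monotonicity of $\dmin$ produce the optimal denominator $d$ appearing in the bound.

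Then I would split according to $N_i(t)$ at the threshold $n_1=\frac{\log T}{d}+2-2\al$, chosen so that the exponential factor above is at most $1/T$ once $n\ge n_1$. Because every round with $J(t)=i$ increments $N_i$, there is at most one pull at each value of $n$, so the pulls with $N_i(t)\le n_1$ number at most $n_1=\frac{\log T}{d}+2-2\al$, which already yields the leading term and the additive $2-2\al$. For the pulls with $N_i(t)>n_1$ I would take conditional expectations given $\mathcal{F}_{t-1}$, replace each qualifying indicator by the displayed tail bound, and sum the resulting geometric-type series; the smallness $\e^{-(n+2\al-2)d}\le1/T$ combined with the fact that the dwell times at the successive values of $n$ sum to at most $T$ collapses this contribution to the $\frac{\sqrt{\si_i^2+\ep}}{\De_i-2\ep}$ remainder.

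The hard part will be this last conversion from the per-round posterior tail to the per-pull count: a naive bound that drops $\{J(t)=i\}$ and sums $\pti{i}{n}{-\ep}$ over all rounds fails, because arm $i$ may dwell at a fixed $N_i=n$ for many rounds and the pull round is selected for an atypically large sample. The decomposition at $n_1$ is what circumvents this, since below $n_1$ one counts pulls directly (one per value of $n$) and above $n_1$ the $1/T$ smallness of the tail makes even the crude round-based bound summable. Assembling the two pieces and substituting $\De_i-2\ep$ and $\si_i^2+\ep$ back in gives $\frac{\log T}{d}+2-2\al+\frac{\sqrt{\si_i^2+\ep}}{\De_i-2\ep}$, and since the last two terms are $\lo(1)$ the second displayed form of the lemma follows.
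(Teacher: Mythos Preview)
Your plan matches the paper's proof essentially step for step: reduce $\{J(t)=i\}\cap\eA(t)$ to $\{\mut_i(t)\ge-\ep\}$, invoke \eqref{eq_upper} on $\eB_i(t)$, split at the threshold $n_1=\frac{\log T}{d}+2-2\al$, count the at most $n_1$ pulls with $N_i(t)<n_1$ directly, and for $N_i(t)\ge n_1$ drop $\{J(t)=i\}$ and bound the expected sum crudely by $T$ times the posterior tail at $n=n_1$. One bookkeeping remark: your displayed tail bound (correctly) carries the factor $\sqrt{n}$ coming from $\sqrt{\vhatt{i}{n}}\le\sqrt{n(\si_i^2+\ep)}$, which the paper silently omits; with this factor the contribution above $n_1$ is $\sqrt{n_1}\cdot\frac{\sqrt{\si_i^2+\ep}}{\De_i-2\ep}=O(\sqrt{\log T})$ rather than the stated $O(1)$ constant, so strictly speaking your argument does not land on the exact remainder written in the lemma (nor, in fact, does the paper's), though either version is more than enough for Theorem~\ref{thm_possible}.
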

\begin{lemma}\label{lem_easy}
For any $i\neq 1$,
\begin{align}
\lefteqn{
\!\!\!\!\!
\E\left[\sum_{t=Kn_0+1}^T\idx{J(t)=i\scap \eB_i^c(t)}\right]
}\nn
&\le
\frac{1}{1-\e^{-\frac{\ep^2}{2\si_i^2}}}
+\frac{1}{1-\e^{-h\left(1+\frac{\ep}{\si_i^2}\right)}}
=\lo(\ep^{-2})\per\n
\end{align}
\end{lemma}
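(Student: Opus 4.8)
The plan is to bound the expected number of rounds on which arm $i$ is pulled while the ``good data'' event $\eB_i(t)$ fails. The key observation is that $\eB_i^c(t)$ depends only on the empirical statistics $\muhatn{i}{t}$ and $\vhatn{i}{t}$ of arm $i$, which in turn depend only on the number of samples $N_i(t)$ and the fixed reward sequence. Since the event requires $J(t)=i$, the count $N_i(t)$ increases by one each time the indicator can fire, so I would reindex the sum over $t$ by the sample count $n=N_i(t)$. On the round where arm $i$ is pulled for the $n$-th time, the statistics are exactly $\muhatt{i}{n}$ and $\vhatt{i}{n}$; thus
\begin{align}
\E\left[\sum_{t=Kn_0+1}^T\idx{J(t)=i\scap \eB_i^c(t)}\right]
&\le \sum_{n\ge n_0}\Pr[\eB_i^c \text{ at } n\text{-th sample}]\per\n
\end{align}

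Next I would split the complementary event by a union bound: $\eB_i^c$ occurs when either $\muhatt{i}{n}>\mu_i+\de$ or $\vhatt{i}{n}>n(\si_i^2+\ep)$. Here I expect the parameter $\de$ in the definition of $\eB_i(t)$ to be tied to $\ep$ (the display in Lemma \ref{lem_easy} suggests $\de=\ep$ in the mean deviation, matching the exponent $\ep^2/2\si_i^2$). Applying the large deviation bounds of Lemma \ref{lem_ldp} termwise gives, for the mean, $\Pr[\muhatt{i}{n}\ge \mu_i+\ep]\le \e^{-n\ep^2/(2\si_i^2)}$, and for the variance, $\Pr[\vhatt{i}{n}\ge n(\si_i^2+\ep)]\le \e^{-nh(1+\ep/\si_i^2)}$, where I use $\si^2=\si_i^2+\ep$ so that $\si^2/\si_i^2=1+\ep/\si_i^2$.

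The final step is to sum each geometric series over $n$. Each tail bound is of the form $\e^{-nc}$ with a positive constant $c$ (either $c=\ep^2/(2\si_i^2)$ or $c=h(1+\ep/\si_i^2)$), so $\sum_{n\ge 1}\e^{-nc}\le 1/(1-\e^{-c})$, which yields exactly the two summands
\begin{align}
\frac{1}{1-\e^{-\frac{\ep^2}{2\si_i^2}}}+\frac{1}{1-\e^{-h\left(1+\frac{\ep}{\si_i^2}\right)}}\per\n
\end{align}
To obtain the stated order $\lo(\ep^{-2})$ I would Taylor-expand the denominators for small $\ep$: the first gives $1-\e^{-\ep^2/(2\si_i^2)}\sim \ep^2/(2\si_i^2)$, so that term is $\lo(\ep^{-2})$, and since $h(1+x)\sim x^2/4$ for small $x$ one has $h(1+\ep/\si_i^2)\sim \ep^2/(4\si_i^4)$, making the second term $\lo(\ep^{-2})$ as well.

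I do not anticipate a serious obstacle here; this is the ``easy'' lemma of the three. The only point demanding care is the reduction from a sum over rounds $t$ to a sum over the per-arm sample index $n$: I must justify that the indicator $\idx{J(t)=i\scap \eB_i^c(t)}$ can be charged to distinct values of $n$ without double counting, which follows because $N_i(t)$ is strictly increasing across the rounds on which $J(t)=i$. A minor subtlety is that the independence of $\muhatt{i}{n}$ and $\vhatt{i}{n}$ (standard for Gaussian samples) lets the union bound be applied cleanly, though even without independence the union bound over the two failure modes suffices.
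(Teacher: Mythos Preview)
Your proposal is correct and follows essentially the same route as the paper: reindex the sum over rounds by the per-arm sample count $n$ (using that $J(t)=i$ forces distinct values of $N_i(t)$), apply the union bound over the two failure modes of $\eB_i^c$, invoke Lemma~\ref{lem_ldp} with $\de=\ep$ to get the exponential tails $\e^{-n\ep^2/(2\si_i^2)}$ and $\e^{-nh(1+\ep/\si_i^2)}$, and sum the resulting geometric series. Your Taylor-expansion justification of the $\lo(\ep^{-2})$ rate is also exactly what the paper (implicitly) uses.
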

We prove Lemma \ref{lem_easy}
in Appendix
and prove Lemmas \ref{lem_difficult} and \ref{lem_main} in this section.

Whereas the second term of \eqref{bunkai} becomes the main term of
the regret,
the evaluation of the first term is the
most difficult point of the proof,
which corresponds to Lemma \ref{lem_difficult}.
In fact, 
it is reported in \emcite{burnetas} that
they were not able to prove the asymptotic optimality of
a policy for the normal distribution model
because of difficulty of the evaluation corresponding to
this term.
Also note that this is the term
which does not become a constant
in the case $\al\ge 0$ and is considered in the proof of
Theorem \ref{thm_impossible}.

In this paper we evaluate this term
by first bounding this term
for a fixed statistic $\samplet{i}{n}=(\muhatt{i}{n},\vhatt{i}{n})$ and
finally taking its expectation,
whereas a probability on this statistic
is first evaluated in \emcite{burnetas}.
By leaving the evaluation on the distribution of $\samplet{i}{n}$
to the latter part,
we can significantly simplify the integral
by variable transformation.

\begin{proof2}{of Lemma \ref{lem_difficult}}
First we bound the summation as
\begin{align}
\lefteqn{
\sum_{t=Kn_0+1}^T \idx{J(t)\neq 1 \scap \eA^c(t)}
}\nn
&=
\sum_{n=n_0}^{T}\sum_{t=Kn_0+1}^{T} \idx{J(t)\neq 1\scap \eA^c(t)\scap N_1(t)=n}\nn
&=
\sum_{n=n_0}^{T}\sum_{m=1}^{T}\nn
&\qquad\idx{m\le \sum_{t=Kn_0+1}^{T} \idx{J(t)\neq 1\scap \eA^c(t)\scap N_1(t)=n}}.\n
\end{align}
Note that
\begin{align}
m\le \sum_{t=Kn_0+1}^{T} \idx{J(t)\neq 1\scap \eA^c(t)\scap N_1(t)=n}\n
\end{align}
implies that $\mut_1(t)\le \mut^*(t)\le -\ep$ occurred for the first $m$ elements
of $\{t: \eA^c(t)\scap N_1(t)=n\}$.
Therefore,
\begin{align}
\lefteqn{
\!\!\!\!\!\!\!\!\!\!\!\!\!\!\!\!\!\!\!\!\!\!\!\!\!\!\!\!\!\!\!\!\!\!\!
\Pr\left[
m\le \sum_{t=Kn_0+1}^{T} \idx{J(t)\neq 1\scap \eA^c(t)\scap N_1(t)=n}
\right]
}\nn
&\le
(1-\pti{1}{n}{-\ep})^m\n
\end{align}
and we have
\begin{align}
\lefteqn{
\E\left[\sum_{t=Kn_0+1}^T \idx{J(t)\neq 1 \cup \eA^c(t)}\right]
}\nn
&\le
\E\left[
\sum_{n=n_0}^{T}\sum_{m=1}^{T}
(1-\pti{1}{n}{-\ep})^m
\right]\nn
&\le
\sum_{n=n_0}^{T}
\E\left[\frac{1-\pti{1}{n}{-\ep}}{\pti{1}{n}{-\ep}}
\right]\per\label{matizikan}
\end{align}
Since $\pti{i}{n}{-\ep}\ge 1/2$ for $-\ep\le \muhatt{i}{n}$
from the symmetry of $t$-distribution,
this expectation is partitioned into
\begin{align}
\lefteqn{
\E\left[\frac{1-\pti{1}{n}{-\ep}}{\pti{1}{n}{-\ep}}\right]
}\nn
&\le
2\E\left[\idx{-\ep\le\muhatt{1}{n}}(1-\pti{1}{n}{-\ep})\right]\nn
&\quad+
\E\left[\frac{\idx{\muhatt{1}{n}\le -\ep}}{\pti{1}{n}{-\ep}}\right]
\nn
&\le
\Pr\left[-\ep<\muhatt{1}{n}\le -\ep/2\right]
\nn&\quad
+\Pr\left[-\ep/2<\muhatt{1}{n}\scap \vhatt{1}{n}\ge 2n\right]\nn
&\quad+2\E\big[\idx{-\ep/2<\muhatt{i}{n}\scap \vhatt{1}{n}\le 2n}
(1-\pti{i}{n}{-\ep})\big]\nn
&\quad +\E\left[\frac{\idx{\muhatt{1}{n}\le -\ep}}{\pti{1}{n}{-\ep}}\right]
.
\label{partition4}
\end{align}
From Lemma \ref{lem_ldp}, the first and second terms of \eqref{partition4}
are bounded as
\begin{align}
\Pr\left[-\ep<\muhatt{i}{n}\le -\ep/2\right]
&\le
\e^{-\frac{n\ep^2}{8}}\com\nn
\Pr\left[-\ep/2<\muhatt{1}{n}\scap \vhatt{1}{n}\ge 2n\right]
&\le\e^{-nh(2)}\com\label{term3}
\end{align}
respectively.
Next,
recall that $\samplet{1}{n}=(\muhatt{i}{n}, \vhatt{i}{n})$.
Then, from the symmetry of $t$-distribution
\begin{align}
1-p_n(-\ep|\muhatt{i}{n}, \vhatt{i}{n})
&=1-p_n(-\muhatt{1}{n}-\ep|0,\vhatt{1}{n})\nn
&=p_n(\muhatt{1}{n}+\ep|0,\vhatt{1}{n})\nn
&=p_n(2\muhatt{1}{n}+\ep|\muhatt{1}{n},\vhatt{1}{n})\n
\end{align}
and the third term of \eqref{partition4} is bounded
from \eqref{eq_upper} as
\begin{align}
\lefteqn{
\E\big[\idx{-\ep/2<\muhatt{i}{n}\scap \vhatt{1}{n}\le 2}
(1-\pti{i}{n}{-\ep})\big]
}\nn
&=
\E\big[\idx{-\ep/2<\muhatt{i}{n}\scap \vhatt{1}{n}\le 2}
\pti{i}{n}{2\muhatt{i}{n}-\ep}\big]\nn
&\le
\frac{2\sqrt{2}}{\ep}
\left(1+\frac{\ep^2}{8}\right)^{-\frac{n}{2}-\al+1}\per\label{term4}
\end{align}
Finally we evaluate the fourth term of \eqref{partition4}.
From \eqref{normal_density} and \eqref{eq_lower}, we have
\begin{align}
\lefteqn{
\E\left[\frac{\idx{\muhatt{1}{n}\le -\ep}}{\pti{1}{n}{-\ep}}\right]
}\nn
&\le
\frac1{A_{n,\al}}\int_{-\infty}^{-\ep}\int_{0}^{\infty}
\left(1+\frac{n(x+\ep)^2}{s}\right)^{\frac{n-1}{2}+\al}\nn
&\qquad\cdot
\sqrt{\frac{n}{2\pi}}\e^{-\frac{nx^2}{2}}
\frac{s^{\frac{n-3}{2}}
\e^{-\frac{s}{2}}}{2^{\frac{n-1}{2}}\Ga(\frac{n-1}{2})}
\rd s\rd x\nn
&\le
\frac{\e^{-n\ep^2/2}}{A_{n,\al}}\int_{-\infty}^{-\ep}\int_{0}^{\infty}
\left(1+\frac{n(x+\ep)^2}{s}\right)^{\frac{n-1}{2}+\al}\nn
&\qquad\cdot
\sqrt{\frac{n}{2\pi}}\e^{-\frac{n(x+\ep)^2}{2}}
\frac{s^{\frac{n-3}{2}}
\e^{-\frac{s}{2}}}{2^{\frac{n-1}{2}}\Ga(\frac{n-1}{2})}
\rd s\rd x\label{main_sekibun}
\end{align}
by $x^2\ge (x+\ep)^2+\ep^2$ for $x\le -\ep\le 0$.
By letting
\begin{align}
(x,s)=\left(
-\ep-\sqrt{\frac{2zw}{n}},\,
2 z (1-w)\right)\com\n
\end{align}
we have
\begin{align}
\rd x \rd s&=
\det\left(
\begin{array}{cc}
-\sqrt{\frac{w}{2nz}}&2(1-w)\\
-\sqrt{\frac{z}{2nw}}&-2z
\end{array}
\right)\rd z \rd w\nn
&=
\sqrt{\frac{2z}{nw}}
\rd z \rd w\n
\end{align}
and \eqref{main_sekibun} is rewritten as
\begin{align}
\lefteqn{
\E\left[\frac{\idx{\muhatt{i}{n}\le -\ep}}{\pti{i}{n}{-\ep}}\right]
}\nn
&\le
\frac{\e^{-n\ep^2/2}}{A_{n,\al}}\int_{0}^{1}\int_{0}^{\infty}
\left(1+\frac{w}{1-w}\right)^{\frac{n-1}{2}+\al}\nn
&\qquad\cdot
\sqrt{\frac{n}{2\pi}}\e^{-zw}
\frac{(z(1-w))^{\frac{n-3}{2}}
\e^{-z(1-w)}}{2\Ga(\frac{n-1}{2})}
\sqrt{\frac{2z}{nw}}\rd z \rd w\nn
&=
\frac{\e^{-n\ep^2/2}}{2\sqrt{\pi}A_{n,\al}\Ga(\frac{n-1}{2})}\int_{0}^{1}
w^{-1/2}(1-w)^{-1-\al}\rd w
\nn
&\qquad\cdot
\int_{0}^{\infty}
\e^{-z}
z^{\frac{n}{2}-1}\rd z \nn
&=
\frac{\e^{-n\ep^2/2}}{2\sqrt{\pi}A_{n,\al}\Ga(\frac{n-1}{2})}
\mathrm{B}(1/2, -\al)
\Gamma(n/2)\nn
&\le
\frac{\e^{1/3}\sqrt{(n+2\al)(n-1)}}{2}
\e^{-n\ep^2/2}\mathrm{B}(1/2, -\al)
\nn
&\phantom{wwwwwwwwwwwwwwww}(\mbox{by Lemma \ref{lem_gamma} and \eqref{eq_an}})\nn
&\le
n\e^{-n\ep^2/2}\mathrm{B}(1/2, -\al)\per\;\,\quad
\mbox{(by $\e^{1/3}\le 2$ and $\al<0$)}
\label{term1}
\end{align}
By combining \eqref{matizikan}, \eqref{partition4},
\eqref{term3}, \eqref{term4} with \eqref{term1}, we obtain
\begin{align}
\lefteqn{
\E\left[\sum_{t=Kn_0+1}^T \idx{J(t)\neq 1 \scap \eA^c(t)}\right]
}\nn
&\le
\sum_{n=n_0}^T\Bigg(
\e^{-\frac{n\ep^2}{8}}+\e^{-nh(2)}+
\frac{2\sqrt{2}}{\ep}
\left(1+\frac{\ep^2}{8}\right)^{-\frac{n}{2}-\al+1}\nn
&\phantom{wwwwwwwwwwwwwww}+
n\e^{-n\ep^2/2}\mathrm{B}(1/2, -\al)\Bigg)\nn
&\le
\frac{1}{1-\e^{-\frac{\ep^2}{8}}}
+\frac{1}{1-\e^{-h(2)}}
+\frac{2\sqrt{2}}{\ep}
\frac{\left(1+\frac{\ep^2}{8}\right)^{1-\al}}{1-\left(1+\frac{\ep^2}{8}\right)^{-1/2}}\nn
&\quad+
\frac{\mathrm{B}(1/2,-\al)}{\left(1-\e^{-\frac{\ep^2}{2}}\right)^2}\nn
&=\lo(\ep^{-2})+\lo(1)+\lo(\ep^{-3})+\lo(\ep^{-4})=\lo(\ep^{-4})
\per\!\!\dqed\n
\end{align}
\end{proof2}
\begin{proof}[Proof of Lemma \ref{lem_main}]
Let $n_i>0$ be arbitrary.
Then
\begin{align}
\lefteqn{
\sum_{t=Kn_0+1}^T\idx{J(t)=i\scap \eA(t)\scap \eB_i(t)}
}\nn
&\le
\sum_{t=Kn_0+1}^T \idx{\mut_i(t)\ge -\ep\scap B_i(l)}\nn
&\le
n_i+
\sum_{t=Kn_0+1}^{T}
\idx{\mut_i(t)\ge -\ep\scap \eB_i(t),\,N_i(t)\ge n_i}\per\label{lognterm}
\end{align}
Under the condition $\{\eB_i(t)\scap N_i(t)=n\}$,
the probability of the event $\mut_i(t)\ge -\ep=\mu^*-\ep$
is bounded from Lemma \ref{lem_upper} as
\begin{align}
\pti{i}{n}{-\ep}
&\le
\frac{\sqrt{\si_i^2+\ep}}{\De_i-2\ep}
\left(1+\frac{(\De_i-2\ep)^2}{\si_i^2+\ep}\right)^{-\frac{n}{2}-\al+1}\nn
&=
\frac{\sqrt{\si_i^2+\ep}}{\De_i-2\ep}
\e^{-(n+2\al-2)\dmin(\De_i-2\ep, \si_i^2+\ep)}.\n
\end{align}
Therefore the expectation of \eqref{lognterm} is bounded as
\begin{align}
\lefteqn{
\E\left[
\sum_{t=Kn_0+1}^T\idx{J(t)=i\scap \eA(t)\scap \eB_i(t)}
\right]
}\nn
&\le
n_i+
\sum_{t=Kn_0+1}^{T}
\Pr\left[\mut_i(t)\ge -\ep\scap \eB_i(t),\,N_i(t)\ge n_i\right]\nn
&\le
n_i+
T\frac{\sqrt{\si_i^2+\ep}}{\De_i-2\ep}
\e^{-(n_i+2\al-2)\dmin(\De_i-2\ep, \si_i^2+\ep)}\n
\end{align}
and we complete the proof by letting
$n_i=(\log T)/\allowbreak \dmin(\De_i-2\ep,\si_i^2+\ep)+2-2\al$.
\end{proof}

\section{Analysis for Optimistic Priors}\label{sect_dame}
In this section we prove Theorem \ref{thm_impossible}.
As mentioned before,
the evaluation in the proof
corresponds to Lemma \ref{lem_difficult},
in which $\al<0$ is required so that $\mathrm{B}(1/2,-\al)$ becomes finite.
We show in the following proof that this requirement is actually
necessary to achieve the asymptotic bound.

\begin{proof}[Proof of Theorem \ref{thm_impossible}]
We assume $(\mu_1,\si_1^2)=(0,1)$ without loss of generality.
Fix any $n\ge n_0$ and let $T_{1,n}\in\mathbb{N}\cup \{+\infty\}$ be the first round
at which
the number of samples from the arm 1 is $n$, that is,
we define $T_{1,n}=\min\{t:N_1(t)=n\}$.
Since $T_{1,n}=t$ implies that the arm $2$ is pulled $t-n-1$ times
through the first $t-1$ rounds, we have
\begin{align}
\lefteqn{
\E[\regret(T)]
}\nn
&=
\De_i\sum_{t=1}^{\infty}\Pr[T_{1,n}=t]\E\left[
\sum_{m=1}^{2T}\idx{J(m)=2}
\Bigg|T_{1,n}=t\right]\nn
&\ge\De_i\sum_{t=1}^{T}\Pr[T_{1,n}=t]
\E\left[\sum_{m=1}^{2T}\idx{J(m)=2}\Bigg|T_{1,n}=t\right]\nn
&\quad+\De_i\sum_{t=T+1}^{\infty}\Pr[T_{1,n}=t](T-n)\per\label{bunkai_impo}
\end{align}

Now we consider the case $t\le T$.
The conditional expectation in
\eqref{bunkai_impo}
is bounded as
\begin{align}
\lefteqn{
\E\left[\sum_{m=1}^{2T} \idx{J(m)=2}\Bigg|T_{1,n}=t\right]
}\nn
&\ge\E\left[\sum_{m=t}^{T+t-1} \idx{J(m)=2\scap N_1(m)=n}\Bigg|T_{1,n}=t\right]\n
\end{align}
Note that
if
$\{J(m)\neq 2\scap N_1(m)=n\}$ then $N_1(m')>n$
for any $m'>m$.
Therefore, for any $m\ge T_{1,n}$,
\begin{align}
\{J(m)=2,\, N_1(m)=n\}
&\Leftrightarrow\!
\bigcup_{k=0}^{m-T_{1,n}}\{J(T_{1,n}+k)=2\}\nn
&\Leftarrow\!
\bigcup_{k=0}^{m-T_{1,n}}\{\mut_1(T_{1,n}+k)<\mu_2\}\n
\end{align}
since $\mut_2(t)=\mu_2$ always holds.
By defining $\ptic{1}{n}{\mu_2}\allowbreak =1-\pti{1}{n}{\mu_2}$
we have
\begin{align}
\lefteqn{
\E\left[\sum_{m=1}^{T+t-1} \idx{J(m)=2}\Bigg|T_{1,n}=t\right]
}\nn
&\ge\E\left[\sum_{m=t}^{T+t-1} \idx{\bigcup_{k=0}^{m-t} \{\mut_1(t+k)<\mu_2\}}\Bigg|T_{1,n}=t\right]\nn
&=\E\left[\sum_{m=t}^{T+t-1}(\ptic{1}{n}{\mu_2})^{m-t+1}\right]\nn
&\ge\E\left[\sum_{m=1}^{T}(\ptic{1}{n}{\mu_2})^{m}\right]\nn
&=\E\left[
\left(1-(\ptic{1}{n}{\mu_2})^{T}\right)\frac{\ptic{1}{n}{\mu_2}}{\pti{1}{n}{\mu_2}}
\right]\nn
&\ge\frac12\E\left[
\idx{(\ptic{1}{n}{\mu_2})^T\le 1/2}
\frac{\ptic{1}{n}{\mu_2}}{\pti{1}{n}{\mu_2}}
\right]\nn
&\ge\frac12\E\left[
\frac{
\idx{(\ptic{1}{n}{\mu_2})^T\le 1/2}
}{\pti{1}{n}{\mu_2}}
\right]-\frac12\per\label{sita_id}\\
&\phantom{wwwwwwwwwwwwww}(\mbox{by $(1-p)/p= 1/p-1$})\n
\end{align}
Here we obtain from \eqref{eq_lower} that
\begin{align}
\lefteqn{
(\ptic{1}{n}{\mu_2})^T\le 1/2
}\nn
&\Leftrightarrow
\pti{1}{n}{\mu_2}\ge 1-2^{-\frac{1}{T}}\nn
&\Leftarrow
\left(1+\frac{n(\mu_2-\muhatt{1}{n})^2}{\vhatt{1}{n}}\right)^{-\frac{n-1}{2}-\al}
\ge
\frac{1-2^{-\frac{1}{T}}}{A_{n,\al}}\nn
&\Leftarrow
\left(1+\frac{n(\mu_2-\muhatt{1}{n})^2}{\vhatt{1}{n}}\right)^{-\frac{n-1}{2}-\al}
\ge
\frac{\log 2}{A_{n,\al}T}\nn
&\phantom{wwwwwwwwwwwwwwwww}(\mbox{by $2^{x}\ge 1+x\log 2$})\nn
&\Leftrightarrow
\frac{n(\mu_2-\muhatt{1}{n})^2}{\vhatt{1}{n}}
\le
\left(\frac{A_{n,\al}T}{\log 2}\right)^{\frac{1}{\frac{n-1}{2}+\al}}-1
\;=:\;C_T\per\label{def_ct}
\end{align}
Therefore the expectation in \eqref{sita_id}
is bounded from \eqref{eq_upper}
as
\begin{align}
\lefteqn{
\E\left[
\frac{\idx{(\ptic{1}{n}{\mu_2})^T\le 1/2}}{\pti{1}{n}{\mu_2}}
\right]
}\nn
&\ge
\!\!\!
\iintt_{
\begin{array}{c}
{\scriptstyle x\le \mu_2,\,s\ge 0,}\\
{\scriptstyle \frac{n(\mu_2-x)^2}{s}\le C_T}
\end{array}
}
\!\!\!
\frac{\mu_2-x}{\sqrt{s}}
\left(1+\frac{n(\mu_2-x)^2}{s}\right)^{\frac{n}{2}+\al-1}\nn
&\phantom{wwwwwwwwwwwww}\cdot
\sqrt{\frac{n}{2\pi}}
\e^{-\frac{nx^2}{2}}
\frac{s^{\frac{n-3}{2}}\e^{-\frac{s}{2}}}{2^{\frac{n-1}{2}}\Ga(\frac{n-1}{2})}
\rd x \rd s\nn
&=
\frac{\sqrt{n}\e^{-\mu_2^2/2}}{\sqrt{\pi}2^{\frac{n}{2}}\Ga(\frac{n-1}{2})}
\!\!\!
\iintt_{
\begin{array}{c}
{\scriptstyle x\le\mu_2,\,s\ge 0,}\\
{\scriptstyle \frac{n(\mu_2-x)^2}{s}\le C_T}
\end{array}
}
\!\!\!
\e^{-\frac{n(\mu-x)^2}{2}+2\mu_2(\mu_2-x)}
\nn
&\phantom{wi}\cdot
(\mu_2-x)
\left(1+\frac{n(\mu_2-x)^2}{s}\right)^{\frac{n}{2}+\al-1}
s^{\frac{n}{2}-2}\e^{-\frac{s}{2}}
\rd x \rd s\per\n
\end{align}
By letting
\begin{align}
(x,s)=\left(
\mu_2-\sqrt{\frac{2zw}{n}},\,
2 z (1-w)\right)\com\n
\end{align}
we obtain in a similar way to \eqref{term1} that
\begin{align}
\lefteqn{
\E\left[
\frac{
\idx{(\ptic{1}{n}{\mu_2})^T\le 1/2}
}{\pti{1}{n}{\mu_2}}
\right]
}\nn
&\ge
\frac{\e^{-\mu_2^2/2}}{2\sqrt{\pi n}\Ga(\frac{n-1}{2})}
\int_{0}^{\frac{1}{1+\frac{1}{C_T}}} \int_{0}^{\infty}
\e^{-z}\e^{2\mu_2\sqrt{\frac{2zw}{n}}}
\nn
&\qquad
\cdot z^{\frac{n}{2}-1}
(1-w)^{-1-\al}\rd z \rd w\nn
&\ge
\frac{\e^{-\mu_2^2/2}}{2\sqrt{\pi n}\Ga(\frac{n-1}{2})}
\int_{0}^{\infty}
\e^{2\mu_2\sqrt{\frac{2z}{n}}}
\e^{-z}z^{\frac{n}{2}-1}
\rd z\nn
&\phantom{wi}\cdot
\int_{0}^{\frac{1}{1+\frac{1}{C_T}}}
(1-w)^{-1-\al}\rd w\per
\qquad(\mbox{by $\mu_2<\mu_1=0$})
\n
\end{align}
Here note that
\begin{align}
\int_{0}^{\frac{1}{1+\frac{1}{C_T}}}
(1-w)^{-1-\al}\rd w
&=
\begin{cases}
\log (1+C_T),&\al=0,\\
\frac{(1+C_T)^{\al}-1}{\al},&\al>0.
\end{cases}\n
\end{align}
Then there exists a constant $B_{n,\al,\mu_2}$ such that
\begin{align}
\lefteqn{
\E\left[
\frac{
\idx{(\ptic{1}{n}{\mu_2})^T\le 1/2}
}{\pti{1}{n}{\mu_2}}
\right]
}\nn
&\ge
\begin{cases}
B_{n,\al,\mu_2}\log (1+C_T),&\al=0,\\
B_{n,\al,\mu_2}((1+C_T)^{\al}-1),&\al>0.
\end{cases}\label{impo_owari}
\end{align}
Finally by putting \eqref{bunkai_impo}, \eqref{sita_id} and \eqref{impo_owari} together
we obtain for $\al=0$ that
\begin{align}
\lefteqn{
\E[\regret(2T)]
}\nn
&\ge
\De_2\min\left\{
T-n,\, (1/2)B_{n,\al,\mu_2}\log (1+C_T)-1/2
\right\}.\n
\end{align}
Eq.\,\eqref{cannot1} follows since $n\ge n_0$ is fixed and
$C_T$ defined in \eqref{def_ct}
is polynomial in $T$.
Eq.\,\eqref{cannot2} for $\al>0$ is obtained in the same way.
\end{proof}

\section{Conclusion}
We considered the stochastic multiarmed bandit problem
such that each reward follows a normal distribution
with an unknown mean and variance.
We proved that Thompson sampling
with prior $\pi(\mu_i,\si_i^2)\sim(\si_i^2)^{-1-\al}$
achieves the asymptotic bound if $\al<0$ but
cannot if $\al\ge 0$, which includes
reference prior $\al=0$ and Jeffreys prior $\al=1/2$.

A future work is to examine
whether
TS with non-informative priors is risky or not
for other multiparameter models
where TS is used without theoretical analysis
(see e.g., \emcite{thompson_empirical}).
Since the analysis of this paper heavily depends on
the specific form of normal distributions,
it is currently unknown whether the technique of this paper
can be applied to other models
and this generalization remains as an important open problem.




\appendix
\section*{Appendix: Proof of Lemmas}
We prove Lemmas \ref{lem_gamma}, \ref{lem_ldp}, \ref{lem_upper}
and \ref{lem_easy} in this appendix.

First we prove Lemma \ref{lem_gamma} on the ratio of gamma functions.
\begin{proof}[Proof of Lemma \ref{lem_gamma}]
Since
\begin{align}
\sqrt{2\pi}z^{z-1/2}\e^{-z}\le \Ga(z)\le \sqrt{2\pi}\e^{1/6}z^{z-1/2}\e^{-z}\n
\end{align}
for $z\ge 1/2$ from Stirling's formula \cite[Sect.\,5.6(i)]{nist_handbook}, we have
\begin{align}
\frac{\Ga(z+\frac12)}{\Ga(z)}
&\ge
\e^{-2/3}\sqrt{z}\left(1+\frac1{2z}\right)^{z}\nn
&\ge
\e^{-2/3}\sqrt{1/2}\left(1+\frac1{2\cdot 1/2}\right)^{1/2}\nn
&=\e^{-2/3}\per\n
\end{align}
Similarly we have
\begin{align}
\frac{\Ga(z+\frac12)}{\Ga(z)}
&\le
\e^{-1/3}\sqrt{z}\left(1+\frac1{2z}\right)^{z}\nn
&\le
\e^{1/6}\sqrt{z}\com\n
\end{align}
which completes the proof.
\end{proof}

Next we prove Lemma \ref{lem_ldp}
based on Cram\'er's theorem \cite{LDP} given below.
\begin{proposition}[Cram\'er's theorem]
Let $Z_1,Z_2,\cdots$ be i.i.d.~random variables on $\mathbb{R}^d$.
Then, for $\bar{Z}=n^{-1}\sum_{m=1}^n Z_m\in \mathbb{R}^d$
and any convex set $C\in \mathbb{R}^d$,
\begin{align}
\Pr[\bar{Z}\in C]\le \exp\left(-\inf_{z\in C}\Lambda^*(z)\right)\com\n
\end{align}
where
\begin{align}
\Lambda^*(z)=\sup_{\lambda\in\mathbb{R}^d}\{\lambda\cdot z-\log\E[\e^{\lambda\cdot Z_1}]\}\per\n
\end{align}
\end{proposition}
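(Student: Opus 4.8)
The plan is to derive the bound from the exponential form of Markov's inequality (the Chernoff bound) and then to use the convexity of $C$ to identify the resulting exponent with the rate function $\Lambda^*$.

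First I would fix an arbitrary $\lambda\in\mathbb{R}^d$ and observe that on the event $\{\bar Z\in C\}$ we have $\lambda\cdot\bar Z\ge \inf_{z\in C}\lambda\cdot z$, so that
\begin{align}
\Pr[\bar Z\in C]
&\le \E\left[\idx{\bar Z\in C}\,\e^{n\lambda\cdot\bar Z-n\inf_{z\in C}\lambda\cdot z}\right]\nn
&\le \e^{-n\inf_{z\in C}\lambda\cdot z}\,\E\left[\e^{\lambda\cdot\sum_{m=1}^n Z_m}\right]\nn
&= \exp\left(-n\left(\inf_{z\in C}\lambda\cdot z-\log\E[\e^{\lambda\cdot Z_1}]\right)\right),
\end{align}
where the last equality uses independence to factor the moment generating function. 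Taking the best $\lambda$ gives
\begin{align}
\Pr[\bar Z\in C]\le \exp\left(-n\,\sup_{\lambda}\inf_{z\in C}\left\{\lambda\cdot z-\log\E[\e^{\lambda\cdot Z_1}]\right\}\right).\n
\end{align}
This already produces a factor $n$ in the exponent; since $\Lambda^*\ge 0$ (take $\lambda=0$) and $n\ge 1$, the displayed bound is at least as strong as the stated one, and this sharper $n$-dependent form is exactly what is needed to recover the $\e^{-n(\cdot)}$ factors in Lemma \ref{lem_ldp}.

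The remaining and decisive step is to exchange the order of $\sup_\lambda$ and $\inf_{z\in C}$. Writing $\Lambda(\lambda)=\log\E[\e^{\lambda\cdot Z_1}]$ for the cumulant generating function, I must show
\begin{align}
\sup_{\lambda}\inf_{z\in C}\{\lambda\cdot z-\Lambda(\lambda)\}
=\inf_{z\in C}\sup_{\lambda}\{\lambda\cdot z-\Lambda(\lambda)\}
=\inf_{z\in C}\Lambda^*(z).\n
\end{align}
The inequality ``$\le$'' between the first two expressions is weak duality and holds for any set, but it points the wrong way for us; what we genuinely need is the reverse, i.e.\ equality. The integrand $(z,\lambda)\mapsto \lambda\cdot z-\Lambda(\lambda)$ is affine (hence convex) in $z$ and concave in $\lambda$ because $\Lambda$ is convex, and $C$ is convex by hypothesis, so I would invoke Sion's minimax theorem to interchange the two operations. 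This is precisely the point at which convexity of $C$ is indispensable: without it the Chernoff argument only controls the probability by the convex-hull rate $\sup_\lambda\inf_{z\in C}$, which can be strictly smaller than $\inf_{z\in C}\Lambda^*$.

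I expect the main obstacle to be the technical hypotheses of the minimax step rather than the probabilistic estimate. Sion's theorem needs one of the two feasible sets to be compact (or the objective to be suitably coercive), and here $\lambda$ ranges over all of $\mathbb{R}^d$; to handle this I would either restrict $\lambda$ to a large ball and pass to the limit, or argue geometrically by separating the convex set $C$ from the open convex sublevel set $\{z:\Lambda^*(z)<\inf_{z\in C}\Lambda^*(z)\}$ with a hyperplane and reading off the optimal tilt $\lambda^*$ from the separating direction. The degenerate cases---$\inf_{z\in C}\Lambda^*=0$, giving the trivial bound $\Pr\le 1$, and $\inf_{z\in C}\Lambda^*=+\infty$, giving $\Pr[\bar Z\in C]=0$---would be disposed of first, so that the separation argument is applied only when $0<\inf_{z\in C}\Lambda^*<\infty$.
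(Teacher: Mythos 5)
The paper itself contains no proof of this proposition: it is quoted verbatim from the cited reference \cite{LDP} (Dembo and Zeitouni, 1998) and used as a black box in the proof of Lemma \ref{lem_ldp}, so your attempt can only be compared with the standard literature proof. Your overall plan---exponential Markov (Chernoff) plus convex duality---is indeed the right family of argument, and two of your side remarks are correct and useful: the Chernoff computation produces the factor $n$ in the exponent, which the proposition's display omits (harmless as stated, since $\Lambda^*\ge 0$, but it is the $n$-dependent form that Lemma \ref{lem_ldp} actually invokes), and weak duality alone points the wrong way.

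The decisive step, however, has a genuine gap: the minimax equality $\sup_{\lambda}\inf_{z\in C}\{\lambda\cdot z-\Lambda(\lambda)\}=\inf_{z\in C}\Lambda^*(z)$ is \emph{false} for general convex $C$, so no patching of Sion's compactness hypotheses can rescue the argument at the stated generality. A first counterexample: $Z_1\equiv 0$ in $\mathbb{R}$, so $\Lambda\equiv 0$ and $\Lambda^*(z)=\infty$ for $z\neq 0$; with $C=(0,1)$ one gets $\sup_\lambda\inf_{z\in C}\lambda z=0$ while $\inf_C\Lambda^*=\infty$. This instance falls under your excluded case $\inf_C\Lambda^*=\infty$, but your disposal of that case---asserting $\Pr[\bar{Z}\in C]=0$---is itself nontrivial and does not follow from Chernoff (which here only yields $\Pr\le 1$); it needs a separate support argument. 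Worse, there are failures outside your excluded cases: let $Z_1$ be uniform on $\{(0,0),(1,0)\}\subset\mathbb{R}^2$ and $C=\{(x,y):y>0\}\cup\{(1,0)\}$, which is convex. Then $\Lambda^*=\infty$ off the segment $\{y=0,\,0\le x\le 1\}$, so $\inf_C\Lambda^*=\Lambda^*(1,0)=\log 2\in(0,\infty)$ and $\Pr[\bar{Z}\in C]=\Pr[\bar{Z}=(1,0)]=2^{-n}>0$; yet any $\lambda=(\lambda_1,\lambda_2)$ with $\inf_{z\in C}\lambda\cdot z>-\infty$ forces $\lambda_1=0$, $\lambda_2\ge 0$, whence $\sup_\lambda\inf_{z\in C}\{\lambda\cdot z-\Lambda(\lambda)\}=0<\log 2$. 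So for this convex set the sup--inf Chernoff exponent is strictly smaller than $\inf_C\Lambda^*$, and your route cannot produce the stated bound even though the bound is true there (with equality). Two further inaccuracies: $\{z:\Lambda^*(z)<r\}$ is convex but in general \emph{not} open, since $\Lambda^*$ is only lower semicontinuous (in the point-mass example it is the singleton $\{0\}$); and truncating $\lambda$ to a ball of radius $R$ and letting $R\to\infty$ re-creates exactly the interchange you are trying to justify, because $\lim_{R\to\infty}\inf_z\sup_{|\lambda|\le R}$ need not equal $\inf_z\sup_{\lambda}$. The actual repair, as in \cite{LDP}, is to reduce to closed half-spaces by separation and to treat degenerate (flat-support) configurations separately---or simply to prove the proposition for \emph{closed} convex $C$, which is all the paper needs: the set $\{(z^{(1)},z^{(2)}):z^{(2)}-(z^{(1)})^2\ge\si^2\}$ used in Lemma \ref{lem_ldp} is closed and convex, and there your separation argument can be carried through.
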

\begin{proof}[Proof of Lemma \ref{lem_ldp}]
Eq.\,\eqref{ldp_mean} is straightforward
from Cram\'er's theorem with $Z_m:=X_{i,m}$
(see also e.g.~\npcite[Ex.\,2.2.23]{LDP}).

Now we show \eqref{ldp_var}.
Let $Z_m=(Z_m^{(1)},Z_m^{(2)}):=(X_{i,m},\, X_{i,m}^2)\allowbreak\in\mathbb{R}^2$.
Then it is easy to see that
the Fenchel-Legendre transform of the cumulant generating function of $Z_i$
is given by
\begin{align}
\lefteqn{
\Lambda^*(z^{(1)},z^{(2)})
}\nn
&=
\begin{cases}
h\left(\frac{z^{(2)}-(z^{(1)})^2)}{\si_i^2}\right)+\frac{(z^{(1)}-\mu_i)^2}{2\si_i^2},&z^{(2)}>(z^{(1)})^2,\\
+\infty,&z^{(2)}\le(z^{(1)})^2.
\end{cases}\n
\end{align}
Eq.\,\eqref{ldp_var} follows from
\begin{align}
\lefteqn{
\Pr[\vhatt{i}{n}\ge n\si^2]
}\nn
&=
\Pr[\bar{Z}^{(2)}-(\bar{Z}^{(1)})^2\ge \si^2]\nn
&\le
\exp\left(-n
\inf_{(z^{(1)},z^{(2)}):\,z^{(2)}-(z^{(1)})^2\ge \si^2}\Lambda^*(z^{(1)},z^{(2)})
\right)\nn
&\le
\exp\left(-n h\left(\frac{\si^2}{\si_i^2}\right)
\right)\com\n
\end{align}
where the first and the second inequalities follow because
$\{(z^{(1)},z^{(2)}):\,z^{(2)}-(z^{(1)})^2\ge \si^2\}$ is
a convex set and
$h(x)$ is increasing in $x\ge 1$, respectively.
\end{proof}

Next we prove Lemma \ref{lem_upper}
based on
Lemma \ref{lem_gamma}.


\begin{proof2}{of Lemma \ref{lem_upper}}
Letting
\begin{align}
\tilde{A}&=\frac{\Ga(\frac{n}{2}+\al)}{\sqrt{\pi(n+2\al-1)}\Ga(\frac{n-1}{2}+\al)}\com\nn
x_0&=\sqrt{\frac{n(n+2\al-1)}{\vhatt{i}{n}}}(\mu-\muhatt{i}{n})\com\n
\end{align}
we can express $\pti{i}{n}{\mu}$
from \eqref{posterior1} and \eqref{posterior2}
as
\begin{align}
\pti{i}{n}{\mu}
&=
\tilde{A}
\int_{x_0}^{\infty}
\left(1+\frac{x^2}{n+2\al-1}\right)^{-\frac{n}{2}-\al}
\rd x.\label{prob_target}
\end{align}
This integral is bounded from below by
\begin{align}
\lefteqn{
\pti{i}{n}{\mu}
}\nn
&=
\tilde{A}\int_{x_0}^{\infty}
\left(1+\frac{x^2}{n+2\al-1}\right)^{\frac12}\nn
&\qquad\qquad\qquad\qquad\cdot
\left(1+\frac{x^2}{n+2\al-1}\right)^{-\frac{n+1}{2}-\al}
\rd x\nn
&\ge
\tilde{A}\int_{x_0}^{\infty}
\frac{x}{\sqrt{n+2\al-1}}\left(1+\frac{x^2}{n+2\al-1}\right)^{-\frac{n+1}{2}-\al}
\rd x\nn
&\phantom{wwwwwwwwwwwwwwwwwwww}(\mbox{by $\sqrt{1+u^2}\ge u$})\nn
&=
\frac{\tilde{A}}{\sqrt{n+2\al-1}}\left(1+\frac{x_0^2}{n+2\al-1}\right)^{-\frac{n-1}{2}-\al}\per\n
\end{align}
From Lemma \ref{lem_gamma}
\begin{align}
\frac{\tilde{A}}{\sqrt{n+2\al-1}}
&=
\frac{\Ga(\frac{n}{2}+\al)}{2\sqrt{\pi}\Ga(\frac{n+1}{2}+\al)}\nn
&\ge
\frac{1}{2\e^{1/6}\sqrt{\pi(\frac{n}{2}+\al)}}\n
\end{align}
and we obtain \eqref{eq_lower}\per

On the other hand, the integral \eqref{prob_target}
is bounded from above by
\begin{align}
\lefteqn{
\pti{i}{n}{\mu}
}\nn
&=
\tilde{A}\int_{x_0}^{\infty}
\frac1x\cdot x\left(1+\frac{x^2}{n+2\al-1}\right)^{-\frac{n}{2}-\al}
\rd x\nn
&=
\tilde{A}\left[
\frac1x\cdot \frac{\frac{n-1}{2}+\al}{\frac{n-2}{2}+\al}\left(1+\frac{x^2}{n+2\al-1}\right)^{-\frac{n}{2}-\al+1}
\right]_{\infty}^{x_0}\nn
&\quad-\tilde{A}\int_{x_0}^{\infty}
\frac2{x^2}\frac{\frac{n-1}{2}+\al}{\frac{n-2}{2}+\al}
\left(1+\frac{x^2}{n+2\al-1}\right)^{-\frac{n}{2}-\al+1}
\rd x\nn
&\le
\frac{\tilde{A}}{x_0}\frac{\frac{n-1}{2}+\al}{\frac{n-2}{2}+\al}
\left(1+\frac{x_0^2}{n+2\al-1}\right)^{-\frac{n}{2}-\al+1}\per\n
\end{align}
From Lemma \ref{lem_gamma}
\begin{align}
\frac{\tilde{A}}{x_0}\frac{\frac{n-1}{2}+\al}{\frac{n-2}{2}+\al}
&=
\frac{\Ga(\frac{n-2}{2}+\al)}{2\sqrt{\pi n}\Ga(\frac{n-1}{2}+\al)}
\frac{\sqrt{\vhatt{i}{n}}}{\mu-\muhatt{i}{n}}\nn
&\le
\frac{1}{2\sqrt{\pi n}\e^{-2/3}}
\frac{\sqrt{\vhatt{i}{n}}}{\mu-\muhatt{i}{n}}\nn
&\le
\frac{\sqrt{\vhatt{i}{n}}}{\mu-\muhatt{i}{n}}
\n
\end{align}
and we complete the proof. \qed
\end{proof2}

Finally we prove Lemma
\ref{lem_easy}
for the proof of Lemma \ref{lem_possible}.

\begin{proof2}{of Lemma \ref{lem_easy}}
First we have
\begin{align}
\lefteqn{
\sum_{t=Kn_0+1}^T\idx{J(t)=i\scap \eB_i^c(t)}
}\nn
&=
\sum_{n=n_0}^{T}\idx{\bigcup_{t=Kn_0+1}^T \{J(t)=i\scap \eB_i^c(t)\scap N_i(t)=n\}}\nn
&\le
\sum_{n=n_0}^{T}\idx{
\muhatt{i}{n}\ge \mu_i+\de \mbox{ or } \vhatt{i}{n}\ge n(\si_i^2+\ep)
}\per\n
\end{align}
Therefore, from Lemma \ref{lem_ldp},
\begin{align}
\lefteqn{
\!\!\!\!\!
\E\left[\sum_{t=Kn_0+1}^T\idx{J(t)=i\scap \eB_i^c(t)}\right]
}\nn
&\le
\sum_{n=n_0}^{T}\left(
\e^{-n\frac{\ep^2}{2\si_i^2}}+
\e^{-nh\left(1+\frac{\ep}{\si_i^2}\right)}
\right)\nn
&\le
\frac{1}{1-\e^{-\frac{\ep^2}{2\si_i^2}}}
+\frac{1}{1-\e^{-h\left(1+\frac{\ep}{\si_i^2}\right)}}\nn
&=\lo(\ep^{-2})+\lo(\ep^{-2})=\lo(\ep^{-2})
\per\dqed\n
\end{align}
\end{proof2}

\bibliographystyle{mlapa}

\end{document}